%


\documentclass[11pt,reqno]{amsart}
\usepackage{amssymb}
\usepackage{amsfonts}
\usepackage{amsthm}
\usepackage{amscd}
\usepackage{amssymb}



\theoremstyle{plain}
\newtheorem{theorem}{Theorem}[section]

\newtheorem{proposition}[theorem]{Proposition}

\theoremstyle{definition}
\newtheorem{definition}[theorem]{Definition}
\newtheorem{example}[theorem]{Example}

\theoremstyle{remark}
\newtheorem{remark}[theorem]{Remark}

\numberwithin{equation}{section}
\newcommand{\refE}[1]{(\ref{E:#1})}
\newcommand{\refS}[1]{Section~\ref{S:#1}}
\newcommand{\refSS}[1]{Section~\ref{SS:#1}}
\newcommand{\refT}[1]{Theorem~\ref{T:#1}}

\newcommand{\refP}[1]{Proposition~\ref{P:#1}}

\newcommand{\refR}[1]{Remark~\ref{R:#1}}
\newcommand{\R}{\ensuremath{\mathbb{R}}}

\newcommand{\C}{\ensuremath{\mathbb{C}}}
\newcommand{\N}{\ensuremath{\mathbb{N}}}

\newcommand{\Pro}{\ensuremath{\mathbb{P}}}

\newcommand{\F}{\ensuremath{\mathbb{F}}}

\newcommand{\A}{\ensuremath{\mathcal{A}}}

\newcommand{\iu}{{\,\mathrm{i}\,}}

\newcommand{\w}{\omega}
\newcommand{\cim}{C^\infty(M)}
\newcommand{\Cim}{C^\infty(M)}
\newcommand{\dpa}{\partial}
\newcommand{\db}{\overline{\partial}}
\newcommand{\hh}{\hat h}
\newcommand{\Ho}{\mathrm{H}}
\newcommand{\e}{\mathrm{e}}

 \newcommand{\ghm}[1][m]{\Gamma_{hol}(M,L^{#1})}

 \newcommand{\ghmb}[1][m]{\Gamma_{hol}^b(M,L^{#1})}

 \newcommand{\gulm}[1][m]{\Gamma_{\infty}(M,L^{#1})}
 
 \newcommand{\wb}{\overline{w}}
 \newcommand{\zb}{\overline{z}}

 
  \newcommand{\pnc}[1][N]{\ensuremath{\Pro^{#1}(\C)}}

 \newcommand{\skp}[2]{{\langle #1,#2\rangle}}
 \newcommand{\skps}[3]{{\langle #1,#2\rangle}_{#3}}
 \newcommand{\skpsm}[3]{{\langle #1,#2\rangle}^{(m)}_{#3}}

\newcommand{\Lpm}{{\mathrm{L}}^2(M,L^m)}

\newcommand{\Lqv}{{\mathrm{L}}^2(Q,\mu)}

\newcommand{\Tfm}{T_f^{(m)}}
\newcommand{\Tgm}{T_g^{(m)}}
\newcommand{\Tfgm}{T_{\{f,g\}}^{(m)}}
\newcommand{\Tma}[1]{T_{#1}^{(m)}}
\newcommand{\Hm}{\mathcal {H}^{(m)}}
\newcommand{\Hc}{{\mathcal {H}}}
\newcommand{\End}{\mathrm{End}}
\newcommand{\tr}{\operatorname{tr}}
\newcommand{\Tr}{\operatorname{Tr}}
\newcommand{\volu}{\operatorname{vol}}
\newcommand{\Imb}{I^{(m)}}
\newcommand{\Om}{\Omega}
\newcommand{\Ome}{\Omega^{(m)}_\epsilon}

\newcommand{\eghm}{\mathrm{End}(\Gamma_{hol}(M,L^{(m)}))}
\newcommand{\pimh}{\hat\Pi^{(m)}}
\newcommand{\Bm}{\mathcal{B}_m}
\newcommand{\Bh}{\mathcal{B}_{\hbar}}
\newcommand{\Pim}{\Pi^{(m)}}

\newcommand{\la}{\lambda}
\newcommand{\al}{\alpha}

\newcommand{\eam}{e_{\alpha}^{(m)}}
\newcommand{\ebm}{e_{\beta}^{(m)}}

\newcommand{\sm}{\sigma^{(m)}}
\newcommand{\svm}{{\check\sigma}^{(m)}}
\newcommand{\epsm}{\epsilon^{(m)}}


\begin{document}

\title[Berezin-Toeplitz quantization]{Berezin-Toeplitz Quantization\\
  and
Star Products\\ for Compact K\"ahler Manifolds}


\author[Martin Schlichenmaier]{Martin Schlichenmaier}
\address{%
University of Luxembourg\\ 
Mathematics Research Unit, FSTC\\
Campus Kirchberg\\ 6, rue Coudenhove-Kalergi,
L-1359 Luxembourg-Kirchberg\\ Luxembourg
}
\curraddr{}
\email{martin.schlichenmaier@uni.lu}
\thanks{Partial Support by the                           
Internal Research Project  GEOMQ08,  University of Luxembourg,       
is acknowledged.}

\subjclass[2000]{Primary 53D55; Secondary 32J27, 47B35, 53D50, 81S10 }

\keywords{Berezin Toeplitz quantization, K\"ahler manifolds, geometric
quantization, deformation quantization, quantum operators, coherent
states, star products
}
\date{13.2.2012, rev. 25.5.2012}

\begin{abstract}
For compact quantizable K\"ahler manifolds certain naturally defined 
star products and their constructions are reviewed. The presentation
centers around the Berezin-Toeplitz quantization scheme which is 
explained. As star products the Berezin-Toeplitz, Berezin,
and star product of geometric quantization
are  treated in detail. 
It is shown that all three are
equivalent. A prominent role is played by the Berezin
transform and its asymptotic expansion. 
A few ideas on two general constructions of star products 
of separation of variables type  by  
Karabegov and by  Bordemann--Waldmann respectively are given.
Some of the results presented is
work of the author partly joint with Martin Bordemann, Eckhard
Meinrenken and Alexander Karabegov.
At the end some works which make use of graphs in the 
construction and calculation of these star products
are sketched.
\end{abstract}

\maketitle


\section{Introduction}\label{S:intro}
Without any doubts the concepts of quantization is of fundamental
importance in modern physics. These concepts are equally 
influential in mathematics. 
The problems appearing in the physical treatments 
give a whole variety of
questions to be solved by mathematicians. Even more,
quantization
challenges mathematicians to develop corresponding mathematical concepts
with  necessary rigor.
Not only that they are inspiring in the sense that we mathematician provide 
solutions, but these developments will help to advance our
mathematical disciplines. 
It is not the place here to try to give some precise definition
what is quantization. I only mention that one mathematical aspect
of quantization is to pass from the classical ``commutative'' world
to the quantum ``non-commutative'' world.
There are many possible aspects of this passage.
One way is to replace the algebra of classical physical
observables (functions depending  locally on ``position'' and 
``momenta''), i.e. the commutative algebra of functions on the
phase-space manifold, by a non-commutative algebra of operators
acting  on a certain Hilbert space.
Another way is to ``deform'' the pointwise product in the 
algebra of functions into some non-commutative product $\star$.
The first method is called operator quantization, the second 
deformation quantization and the product $\star$ is called a star
product.
In both cases  
by some limiting process
the classical 
situation should be recovered.
I did not touch the question whether it is possible at all to obtain
such objects if one poses certain desirable conditions.
For example, the desired properties for the star product
(to be explained in the article further down) does not allow to deform
the product inside of the function algebra for all functions.
One is forced to pass to the algebra of formal power series
over the functions and deform there.
The resulting object will be a formal deformation quantization.

A special case of the operator method is geometric quantization.
One chooses a complex hermitian (pre)quantum line bundle 
on the phase space manifold. The operators 
act on the space of global sections of the bundle or on suitable subspaces. 
In the  that we can endow our phase-space manifold with the
structure of a K\"ahler manifold 
(and  only this case we are considering here)
we have a more rigid situation.
Our quantum line bundle should  carry a holomorphic  structure, if
the bundle exists at all.
The passage to the classical limit will be obtained by considering
higher and higher tensor powers of the quantum line bundle.
The sections of the bundle are the candidates of the quantum states.
But they depend on too many independent variables. 
In the 
K\"ahler setting there is the naturally defined subspace
of holomorphic sections. These sections are constant in anti-holomorphic
directions. They will be the quantum states.
This selection is sometimes  called  K\"ahler 
polarization.

In this review we will mainly deal with another type of operators 
on the space of holomorphic sections of the bundle. 
These  will be the Toeplitz operators. They are
naturally defined for K\"ahler manifolds. 
The assignment 
defines  the
Berezin-Toeplitz (BT) quantization
scheme. Berezin himself considered it for
certain special manifold \cite{BerCCS}, \cite{Bergc}.

Being a quantum line bundle means that the
curvature of the holomorphic hermitian line bundle is essentially equal to
the K\"ahler form. See \refS{setup} for the precise formulation.
A K\"ahler manifold is called quantizable if it admits a quantum line
bundle.
We will explain below that this is really a condition which 
not always
can be fulfilled.

The author in joint work with Martin Bordemann and Eckhard Meinrenken
\cite{BMS}
showed that at least in the compact quantizable K\"ahler case the
BT-quantization has the correct semi-classical limit behavior, hence
it is a quantization, see \refT{bms}.
In the compact K\"ahler case the operator of geometric quantization
is asymptotically related to the   
Toeplitz operator, see \refE{tuyn}.
The details are presented in \refS{btq}.

The special feature of the Berezin-Toeplitz quantization approach is that
it does not only provide an operator quantization but 
also an intimately
related star product, the Berezin-Toeplitz star product $\star_{BT}$.
It is obtained by ``asymptotic expansion'' of the product of
the two Toeplitz operators associated to the two functions to 
be $\star$-multiplied, see \refE{starapp}.
After recalling the definition of a star product in \refSS{star},
the
results about existence and the properties of $\star_{BT}$ are
given in \refSS{btq}. These  are results of the author partly
in
joint work with Bordemann, Meinrenken, and Karabegov.
The star product is a star product of separation of variables type
(in the sense of Karabegov) or equivalently of Wick type (in the
sense of Bordemann and Waldmann). We recall Karabegov's construction
of star products of this type. In particular, we discuss his formal
Berezin transform. 

In \refS{globToe} we introduce the disc bundle associated to the
quantum line bundle and introduce the global Toeplitz operators.
The individual Toeplitz operators for each tensor power 
of the line bundle correspond
to its  modes. The symbol calculus of generalized Toeplitz operators
due to Boutet de Monvel and Guillemin \cite{BGTo} is used to 
prove some parts of the above mentioned results.
In \refSS{const} as an illustration we explain how
$\star_{BT}$ is constructed. 

Other important techniques which we use in this context are
Berezin-Rawnsley's coherent states, co- and contra-variant symbols
\cite{CGR1} 
\cite{CGR2} 
\cite{CGR3} 
\cite{CGR4}. 
Starting from a function on $M$, assigning to it its Toeplitz
operator
and then calculating the covariant symbol of the operator will yield
another function. The corresponding map on the space of function is
called Berezin transform $I$, see \refS{bere}.
The map will depend on the chosen tensor power $m$ of the line bundle.
\refT{btrans}, obtained jointly with Karabegov, shows that it has
a complete asymptotic expansion.
One of the ingredients of the proof is the off-diagonal expansion 
of the Bergman kernel in the neighborhood of the diagonal \cite{KS}.

With the help of the Berezin transform $I$ the Berezin star product
can
be defined
\begin{equation*}
f\star_B g:=I(I^{-1}(f)\star_{BT}I^{-1}(g)).
\end{equation*}
In Karabegov's terminology both star products 
are dual and opposite to each other.

In \refSS{summary} a summary of the naturally defined star products
are given. These are 
$\star_{BT}$, 
$\star_{B}$,
 $\star_{GQ}$ (the star product of geometric quantization),
$\star_{BW}$ (the star product of Bordemann and Waldmann constructed
in  a manner \`a la Fedosov, see \refSS{bw}).
The star products 
$\star_{BT},\star_{BW}$ are of separation of variables type,
$\star_{B}$ also but with the role of holomorphic and antiholomorphic 
variables switched,
 $\star_{GQ}$ is neither nor.
The first three star products are equivalent.

How the knowledge of the asymptotic expansion of the Berezin transform
will allow to calculate the coefficients of the 
Berezin star product and recursively of the Berezin-Toeplitz star
product
is explained in \refSS{calc}.

In the \refS{graph} we 
consider the Bordemann-Waldmann star product \cite{BW} and 
make some remarks how graphs are 
of help in expressing the star product in a convenient form.
The work of Reshetikhin and Takhtajan \cite{ResTak}, 
Gammelgaard \cite{Gam}, and Huo Xu \cite{Huo1}, \cite{Huo2} 
are sketched.

In an excursion  we describe Kontsevich's construction 
\cite{Kont} of a
star product for arbitrary Poisson structures on $\R^n$.

The closing \refS{appl} gives 
some applications of the Berezin-Toeplitz
quantization
scheme.

This review is based on a talk which I gave in the frame of
the {\it Thematic Program on Quantization, Spring 2011},
at the University of Notre Dame, USA.
Some of the material was added on the basis of the questions
and the discussions 
of the audience. I am grateful to the organizers 
Sam Evens, Michael Gekhtman, Brian Hall, and Xiaobo Liu,
and 
to the audience. 
All of them made
this activity such a pleasant and successful event.
In its present version the review supplements and updates 
\cite{SchlBer},\cite{Schlgeoquant}.
Other properties, like the properties of the coherent state embedding,
more about Berezin symbols, traces 
and examples can be found there.
In particular, \cite{SchlBer} contains a more complete list
of related works of other authors.


\section{The geometric setup}\label{S:setup}

In the following let $\ (M,\w)$ be a
K\"ahler manifold. 
This means $M$ is a complex manifold (of complex dimension $n$)
and $\w$, the K\"ahler form, is
a non-degenerate closed positive $(1,1)$-form.
In the interpretation of physics $M$ will be the phase-space manifold.
(But besides the jargon we will use nothing from physics here.)
Further down we will assume that $M$ is compact.

\medskip

Denote by $\Cim$ the algebra of 
complex-valued (arbitrary often) differentiable functions
with  associative product given by
point-wise multiplication.
After forgetting the complex structure of $M$, our form $\omega$
will become a  symplectic form
and we introduce on 
$\Cim$ a Lie algebra structure, the 
{\em Poisson bracket} $\{.,.\}$, in the following way.
First we 
assign to every  $f\in\Cim$ its {\em Hamiltonian vector field} $X_f$, and 
then to every pair of functions $f$ and $g$ the 
{\em Poisson bracket} $\{.,.\}$ via
\begin{equation}
\label{E:Poi}
\w(X_f,\cdot)=df(\cdot),\qquad  
\{\,f,g\,\}:=\w(X_f,X_g)\ .
\end{equation}
In this way 
$\Cim$ becomes a 
{\em Poisson algebra}, i.e. we have the compatibility
\begin{equation}
\{h,f\cdot g\}=\{h,f\}\cdot g+f\cdot \{h,g\},\qquad f,g,h\in\Cim.
\end{equation}

\medskip

The next step in the geometric set-up  is the choice of
a quantum line bundle.
In the K\"ahler case 
a {\em quantum line bundle} for  $(M,\w)$ is 
a triple $(L,h,\nabla)$, where $L$ is a holomorphic line bundle, 
$h$ a Hermitian metric   on $L$, and
 $\nabla$ a connection  compatible with the metric $h$
and the complex structure, 
such that the (pre)quantum condition 
\begin{equation}\label{E:quant}
\begin{gathered}
\mathrm{curv}_{L,\nabla}(X,Y):=\nabla_X\nabla_Y-\nabla_Y\nabla_X-\nabla_{[X,Y]}
=
-\iu\w(X,Y),\\
\text{in other words} \quad \mathrm{curv}_{L,\nabla}=-\iu\w \ 
\end{gathered}
\end{equation}
is fulfilled.
By the compatibility requirement $\nabla$ is uniquely fixed.
With respect to a local holomorphic frame of the bundle
the metric $h$ will be  
represented by 
a function $\hh$.
Then  the curvature with respect to the compatible
connection 
is given 
by $\db\dpa\log\hh$. 
Hence,  
the quantum condition reads as  
\begin{equation}\iu\db\dpa\log\hh=\w\ .
\end{equation}
If there exists such a quantum line bundle for $(M,\omega)$ then
$M$ is called quantizable. Sometimes  the pair manifold and quantum line
bundle is called quantized K\"ahler manifold.

\noindent
\begin{remark}\label{R:ample} 
Not all K\"ahler manifolds are quantizable.
In the compact K\"ahler case from \refE{quant}  it
follows that the curvature is a positive form, hence 
$L$ is a positive line bundle.
By the Kodaira embedding theorem \cite{SchlRS}
there exists a positive
tensor power $L^{\otimes m_0}$ which has enough global holomorphic
sections to embed the complex manifold $M$ via these sections
into  projective space $\Pro^N(\C)$ of suitable dimension $N$.
By Chow's theorem \cite{SchlRS} it is a smooth projective
variety.
The line bundle  $L^{\otimes m_0}$ which gives an embedding 
is called very ample.
This implies for 
example, that only those higher dimensional  
complex tori are  quantizable which admit  ``enough 
theta functions'', i.e.  which are {abelian varieties}.

A warning is in order, let $\phi:M\mapsto \Pro^N(\C)$ be the
above mentioned 
embedding as complex manifolds. This embedding is in general not
a K\"ahler embedding, i.e. $\phi^*(\w_{FS})\ne \w$, where
$\w_{FS}$ is the standard Fubini-Study K\"ahler form for
$\Pro^N(\C)$. Hence, we cannot restrict our attention only
on K\"ahler submanifolds of projective space.
\end{remark}

For compact K\"ahler manifolds 
we will always assume that 
the quantum bundle $L$ itself is already
very ample.
This 
is not a restriction as   $L^{\otimes m_0}$ will be a
quantum line bundle for the
rescaled 
K\"ahler form $m_0\w$ for the same complex manifold $M$.

Next, we consider all  positive tensor powers 
of the quantum line bundle:
\newline
$(L^m,h^{(m)},\nabla^{(m)})$,
here  $L^m:=L^{\otimes m}$ and $h^{(m)}$ and $\nabla^{(m)}$ are
  naturally
extended.
We 
introduce a product on the space of sections. 
First we take the Liouville form
$\ \Omega=\frac 1{n!}\w^{\wedge n}\ $ as volume form  on $M$
and then set 
for  the  product
and the norm
on  the space $\gulm$ of global $C^\infty$-sections
 (if they are finite)
\begin{equation}
\label{E:skp}
\langle\varphi,\psi\rangle:=\int_M h^{(m)} (\varphi,\psi)\;\Omega\  ,
\qquad
||\varphi||:=\sqrt{\langle \varphi,\varphi\rangle}\ .
\end{equation}
Let $\Lpm$ be the  L${}^2$-completed space of 
bounded sections with respect to this  norm. 
Furthermore, let
$\ghmb$ be 
the space of 
global holomorphic
sections of $L^m$  which are bounded.
It can be identified with a closed subspace of 
$\Lpm$. Denote by 
\begin{equation}
\ \Pim:\Lpm\to\ghmb\ 
\end{equation}
the orthogonal projection.

If the manifold $M$ is compact 
``being bounded'' is of course no restriction. Furthermore,
$\ghm=\ghmb$ and this space 
is finite-dimensional.
Its dimension $N(m):=\dim\ghm$ will be given by the 
Hirzebruch-Riemann-Roch Theorem \cite{SchlRS}.
 Our projection will be
\begin{equation}\label{E:proj}
\ \Pim:\Lpm\to\ghm\ .
\end{equation}
If we fix an orthonormal basis $s_l^{(m)}, l=1,\dots,N(m)$
of $\ghm$
then%
\footnote{In my convention the scalar product is anti-linear in the
first argument.}
\begin{equation}\label{E:proje}
\Pim(\psi)=\sum_{l=1}^{N(m)}\skp {s_l^{(m)}}{\psi}\cdot s_l^{(m)}.
\end{equation}

\section{Berezin-Toeplitz operator quantization}\label{S:btq}

Let us start with the compact K\"ahler manifold case. I will make
some remarks at the end of this section on the general setting.
In the interpretation of physics, our manifold $M$ is a phase-space.
Classical observables are (real-valued) functions on the phase space.
Their values are the physical values to be found by experiments.
The classical observables  commute under  pointwise multiplication.
One of the aspects of quantization is
to replace the classical observable
by something which is non-commutative. One approach is to 
replace the functions  by operators on a certain Hilbert space 
(and the physical values to be measured should
correspond to eigenvalues of them).
In the Berezin-Toeplitz (BT) operator
quantization this is done as follows.
\begin{definition}
For a function $f\in\Cim$ 
the associated {\it Toeplitz operator  $\Tfm$ (of level $m$)}
is defined as
\begin{equation}\label{E:toeplitz}
\Tfm:=\Pim\, (f\cdot):\quad\ghm\to\ghm\ .
\end{equation}
\end{definition}
\noindent
In words: One takes a holomorphic section $s$ 
and multiplies it with the
differentiable function $f$. 
The resulting section $f\cdot s$ will only be  differentiable.
To obtain a holomorphic section, one has to project  it back
on the subspace of holomorphic sections.

With respect to the explicit representation 
\refE{proje} we obtain
\begin{equation}
\Tfm(s):=
\sum_{l=1}^{N(m)}
\skp {s_l^{(m)}}{f\,s}\; s_l^{(m)}.
\end{equation}
After expressing the  scalar product \refE{skp} we get a 
representation of $\Tfm$ as an integral
\begin{equation}
\Tfm(s)(x)=\int_M  f(y)\,
\left(\sum_{l=1}^{N(m)}
h^{(m)}({s_l^{(m)}},{s})(y)\, s_l^{(m)}(x)\right)\,\Omega(y).
\end{equation}
The space $\ghm$ is the {\it quantum space (of level $m$)}.
The linear map
\begin{equation}
\Tma {}:\Cim\to \End\big(\ghm\big),\quad  f\to \Tma f=\Pi^{(m)}(f\cdot)
\ , m\in\N_0
\end{equation}
is the  {\it Toeplitz}  or {
\it Berezin-Toeplitz quantization map (of level $m$)}.
It will 
neither be a Lie algebra homomorphism nor
an associative algebra homomorphism as
in general
$$
T^{(m)}_f\, T^{(m)}_g=\Pi^{(m)}\,(f\cdot)\,\Pi^{(m)}\,(g\cdot)\,\Pi^{(m)}\ne
\Pi^{(m)}\,(fg\cdot)\,\Pi =T^{(m)}_{fg}.
$$
For $M$  a compact K\"ahler manifold
it was already mentioned that 
the space $\ghm$ is finite-dimensional.
On a fixed  level $m$ the BT quantization 
is a map
from the infinite dimensional 
commutative algebra of functions to a noncommutative
finite-dimensional (matrix) algebra.
A lot of classical information will get lost.
 To recover this
information one has to  consider not just a single level $m$ but
all levels together as done in the 
\begin{definition}\label{D:BT}
The Berezin-Toeplitz (BT) quantization is the map
\begin{equation}\label{E:BT}
\Cim\to\prod_{m\in\N_0}\eghm,\qquad
f\to(\Tfm)_{m\in\N_0}.
\end{equation}
\end{definition}
In this way 
a family of
finite-dimensional (matrix) algebras and a family of maps are obtained,
which  in the classical limit should ``converges'' to the
algebra $\Cim$.
That this is indeed the case and what ``convergency'' means will be
made precise in the following.

Set for $f\in\Cim$ by $|f|_\infty$ the sup-norm of $\ f\ $ on $M$ and by 
\begin{equation}
||\Tfm||:=\sup_{\substack {s\in\ghm\\ s\ne 0}}\frac {||\Tfm s||}{||s||}
\end{equation}
the operator norm with respect to the norm \refE{skp}
on $\ghm$.

That the BT quantization is indeed a quantization in the sense that
it has the correct semi-classical limit,
or that it is a strict quantization in the sense of Rieffel,
 is the content of the
following theorem from 1994.

\begin{theorem}
\label{T:bms}
[Bordemann, Meinrenken, Schlichenmaier]
\cite{BMS}

\noindent
(a) For every  $\ f\in \Cim\ $ there exists a $C>0$ such that   
\begin{equation}\label{E:norma}
|f|_\infty -\frac Cm\quad
\le\quad||\Tfm||\quad\le\quad |f|_\infty\ .
\end{equation}
In particular, $\lim_{m\to\infty}||\Tfm||= |f|_\infty$.

\noindent
(b) For every  $f,g\in \Cim\ $ 
\begin{equation}
\label{E:dirac}
||m\iu[\Tfm,\Tgm]-\Tfgm||\quad=\quad O(\frac 1m)\ .
\end{equation}

\noindent
(c) For every  $f,g\in \Cim\ $ 
\begin{equation}\label{E:prod}
||\Tfm\Tgm-T^{(m)}_{f\cdot g}||\quad=\quad O(\frac 1m)
\ .
\end{equation}
\end{theorem}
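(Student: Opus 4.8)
The plan is to lift the entire family $(\Tfm)_{m\in\N_0}$ to a single operator on the circle bundle and to read off all three estimates from the symbol calculus of generalized Toeplitz operators of Boutet de Monvel and Guillemin \cite{BGTo}. Following the set-up of \refS{globToe}, I would pass to the unit circle bundle $Q\subset L^*$, whose Hardy space $\mathcal{H}$ of boundary values of holomorphic functions on the disc bundle decomposes under the $S^1$-action as $\mathcal{H}=\bigoplus_{m\in\N_0}\mathcal{H}_m$ with $\mathcal{H}_m\cong\ghm$. For $f\in\Cim$ the global Toeplitz operator $T_f=\Pi\,M_f\,\Pi$ — with $M_f$ multiplication by the pullback of $f$ and $\Pi$ the Szeg\H{o} projector onto $\mathcal{H}$ — is $S^1$-invariant, because $f$ is constant along the fibres; hence it is block-diagonal and its block on $\mathcal{H}_m$ is exactly $\Tfm$. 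The gain is that $T_f$ is a generalized Toeplitz operator of order $0$, so the three assertions turn into statements about the orders of such operators.

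Three facts from the calculus would then do the work. First, $T_f$ is an order-$0$ generalized Toeplitz operator whose principal symbol is the pullback of $f$ to the symplectic cone $\Sigma\subset T^*Q$, while the generator $D_\varphi$ of the $S^1$-action is an order-$1$ operator acting as multiplication by $m$ on $\mathcal{H}_m$. Second, the composition rules give that $T_f T_g-T_{fg}$ has order $\le -1$, since both $T_f T_g$ and $T_{fg}$ have principal symbol $fg$; and, using the symplectic structure of $\Sigma$ together with the identification \refE{Poi} of the Poisson bracket, that $\iu\,D_\varphi\,[T_f,T_g]$ is an order-$0$ operator with the same principal symbol $\{f,g\}$ as $T_{\{f,g\}}$, so that $\iu\,D_\varphi\,[T_f,T_g]-T_{\{f,g\}}$ has order $\le -1$. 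Third — the bridge between abstract order and the parameter $m$ — an $S^1$-invariant generalized Toeplitz operator of order $-k$ has operator norm on $\mathcal{H}_m$ bounded by $C\,m^{-k}$; morally, order $-k$ is a gain of $k$ negative powers of the cone variable, which on $\mathcal{H}_m$ is $m$.

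Granting these, parts (b) and (c) follow by restriction to $\mathcal{H}_m$. For (c), the order-$\le -1$ operator $T_f T_g-T_{fg}$ restricts to $\Tfm\Tgm-T^{(m)}_{fg}$, whence $\|\Tfm\Tgm-T^{(m)}_{fg}\|=O(1/m)$. For (b), since $D_\varphi$ acts as $m$ on $\mathcal{H}_m$, the order-$\le -1$ operator $\iu\,D_\varphi\,[T_f,T_g]-T_{\{f,g\}}$ restricts to $m\iu[\Tfm,\Tgm]-\Tfgm$, which is therefore $O(1/m)$. The upper bound in (a) needs no microlocal input: since $\Pim$ is an orthogonal projection and multiplication by $f$ has $\mathrm{L}^2$-norm $|f|_\infty$, one has $\|\Tfm s\|=\|\Pim(f\,s)\|\le |f|_\infty\,\|s\|$, so $\|\Tfm\|\le |f|_\infty$.

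The delicate point is the lower bound $\|\Tfm\|\ge |f|_\infty-C/m$ in (a). Here I would fix $x_0\in M$ with $|f(x_0)|=|f|_\infty$ and test $\Tfm$ against a holomorphic section peaked at $x_0$, namely a Berezin-Rawnsley coherent state, exploiting that such sections concentrate near $x_0$ on a scale $\sim m^{-1/2}$; the ratio $\skp{s}{\Tfm s}/\|s\|^2$ then reproduces $f(x_0)$ up to an $O(1/m)$ error controlled by Bergman kernel asymptotics, and this bounds $\|\Tfm\|$ from below. I expect precisely this step — converting the sup-norm of the symbol into a lower bound for the operator norm, in contrast to the effortless upper bounds that the symbol calculus supplies — to be the main obstacle, since it requires quantitative concentration information about holomorphic sections that the order bookkeeping does not by itself furnish.
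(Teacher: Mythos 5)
Your proposal is correct and follows essentially the same route as the paper: parts (b) and (c) via the global Toeplitz operator $T_f=\Pi M_f\Pi$ on the circle bundle, the Boutet de Monvel--Guillemin symbol calculus, and the conversion of ``order $\le -1$'' into an $O(1/m)$ norm bound on $\Hm$ by composing with $D_\varphi$; the upper bound in (a) via $\|\Tfm\|\le\|M_f^{(m)}\|\le|f|_\infty$; and the lower bound in (a) by evaluating $\skp{\eam}{\Tfm\eam}/\skp{\eam}{\eam}$ at a maximum point of $|f|$, which is exactly the paper's Berezin transform argument with the leading-term asymptotics $I^{(m)}(f)=f+O(1/m)$. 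You correctly identify the lower bound as the step requiring the nontrivial analytic input (Bergman kernel/Berezin transform asymptotics), which is precisely where the paper invokes Theorem~\ref{T:btrans}.
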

\noindent
The original proof uses the machinery of generalized Toeplitz
structures and operators as developed by Boutet de Monvel
and Guillemin \cite{BGTo}.
We will give a sketch of some parts of 
the proof in \refS{globToe} and \refSS{norm}.
In the meantime 
there also exists other proofs on the basis of 
Toeplitz kernels, Bergman kernels, Berezin transform etc.
Each of them give very useful additional insights.

We will need in the following from 
\cite{BMS}
\begin{proposition}\label{P:tsur}
On every level $m$  the Toeplitz map  
$$
\Cim\to\eghm,\qquad  f\to\Tfm,
$$
is surjective.
\end{proposition}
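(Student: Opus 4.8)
The plan is to reduce the surjectivity of the linear map $f\mapsto\Tfm$ to a statement about linear independence of a finite family of smooth functions on $M$, and then to prove that independence by a local computation together with the identity theorem for holomorphic functions. The first step is to fix an orthonormal basis $s_1^{(m)},\dots,s_{N(m)}^{(m)}$ of $\ghm$ and compute matrix entries of a Toeplitz operator. Since $\Pim$ is the orthogonal projection onto $\ghm$ and each $s_k^{(m)}$ already lies in $\ghm$, and since the metric is linear in its second slot,
\[
\skp{s_k^{(m)}}{\Tfm s_l^{(m)}}=\skp{s_k^{(m)}}{f\,s_l^{(m)}}=\int_M f\,\psi_{kl}\,\Omega,\qquad \psi_{kl}:=\hm(s_k^{(m)},s_l^{(m)})\in\Cim .
\]

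Next I would use finite-dimensional duality. The matrix-entry functionals $A\mapsto\skp{s_k^{(m)}}{A\,s_l^{(m)}}$ form a basis of $\End(\ghm)^*$, so the Toeplitz map is surjective if and only if no nonzero combination $\sum_{k,l}c_{kl}\skp{s_k^{(m)}}{\Tfm s_l^{(m)}}$ vanishes for every $f$. By the formula above this combination equals $\int_M f\,\Psi\,\Omega$ with $\Psi:=\sum_{k,l}c_{kl}\psi_{kl}\in\Cim$; testing against $f=\overline{\Psi}$ shows it vanishes for all $f$ exactly when $\Psi\equiv 0$. Hence surjectivity is equivalent to the linear independence over $\C$ of the $N(m)^2$ functions $\{\psi_{kl}\}$.

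To establish this independence I would pass to a local holomorphic frame $e$ of $L^m$ on a connected chart $U$, writing $s_k^{(m)}=\hat s_k\,e$ with $\hat s_k$ holomorphic and $\hm(e,e)=\hh>0$, so that $\psi_{kl}=\hh\,\overline{\hat s_k}\,\hat s_l$. A relation $\Psi\equiv 0$ then forces $\sum_k\overline{\hat s_k}\,G_k\equiv 0$ on $U$, where $G_k:=\sum_l c_{kl}\hat s_l$ is holomorphic. The local representatives $\hat s_k$ are themselves linearly independent, since a vanishing combination would (as $e\neq 0$ and by the identity theorem on the connected $M$) vanish globally, contradicting independence of the $s_k^{(m)}$. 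The decisive point is the lemma: if $\hat s_1,\dots,\hat s_N$ are linearly independent holomorphic functions on connected $U$ and $\sum_k\overline{\hat s_k}\,G_k\equiv 0$ with $G_k$ holomorphic, then all $G_k\equiv 0$. I would prove it by applying iterated antiholomorphic derivatives $\db^\beta$ at a fixed point $p$: these annihilate the holomorphic $G_k$, yielding $\sum_k\overline{(\dpa^\beta\hat s_k)(p)}\,G_k(p)=0$ for every multi-index $\beta$. The vectors $\big((\dpa^\beta\hat s_k)(p)\big)_k$ span $\C^N$ (otherwise a fixed covector would annihilate every holomorphic jet of the $\hat s_k$ at $p$, forcing a nontrivial dependence by analyticity), so $G_k(p)=0$; as $p$ is arbitrary, $G_k\equiv 0$, and independence of the $\hat s_l$ gives $c_{kl}=0$.

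The main obstacle, and the only genuinely analytic ingredient, is this last lemma, namely that a sum of products ``antiholomorphic times holomorphic'' cannot vanish identically unless the coefficients vanish; everything else is linear algebra and the non-degeneracy of the $\mathrm{L}^2$ pairing with smooth test functions. The two points demanding care are the local-to-global passage that lets the local representatives $\hat s_k$ inherit linear independence, and the convention that $\skp{\cdot}{\cdot}$ is antilinear in the first argument, which is precisely what lets $f$ factor out of the matrix entry without a conjugate.
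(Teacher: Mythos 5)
Your proof is correct. Note that the paper itself does not prove \refP{tsur}; it only quotes it from \cite{BMS}, so there is no in-text argument to compare against. That said, your route is essentially the standard one from the literature: reduce surjectivity of $f\mapsto\Tfm$ by finite-dimensional duality to the linear independence of the $N(m)^2$ smooth functions $h^{(m)}(s_k^{(m)},s_l^{(m)})$, and then prove that independence by the ``antiholomorphic times holomorphic'' lemma. In \cite{BMS} (and in the related \refP{inj} of this paper) the same duality is packaged via the Hilbert--Schmidt pairing $\skps{A}{\Tfm}{HS}=\int_M f\cdot\overline{\sm(A)}\,\epsilon^{(m)}\Omega$ and the coherent states: an operator $A$ orthogonal to all Toeplitz operators has vanishing covariant symbol, and the independence lemma appears there as the statement that $\skp{\eam}{A\eam}=0$ for all $\alpha$ forces $A=0$. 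Your matrix-entry formulation avoids coherent states entirely and makes the analytic core explicit, which is a virtue. All the individual steps check out: the self-adjointness of $\Pim$ lets $f$ factor out of the matrix entry, testing against $f=\overline{\Psi}$ is legitimate since $\overline{\Psi}\in\Cim$, the jet-spanning argument at a point correctly upgrades pointwise vanishing of $\sum_k\overline{\hat s_k}G_k$ to $G_k\equiv 0$, and the identity theorem transfers linear independence of the global sections to their local representatives. The only implicit hypothesis is connectedness of $M$ (which you invoke and which is standard here); for disconnected $M$ the restriction of $\ghm$ to a single chart would not be injective and one would have to argue component by component.
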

Let us mention that for real-valued $f$ the Toeplitz operator
$\Tfm$ will be selfadjoint. Hence, they have real-valued eigenvalues.
\begin{remark}
(Geometric Quantization.)
 Kostant and Souriau introduced  
the operators of geometric quantization
in this geometric setting.
In a first step the prequantum operator associated to the
bundle $L^m$ (and acting on its sections) for the function $f\in\Cim$ 
is defined as
$P_f^{(m)}:=\nabla_{X_f^{(m)}}^{(m)}+\iu f\cdot  id$.
Here 
$X_f^{(m)}$ is the Hamiltonian vector field of $f$ with respect to the
K\"ahler form $\w^{(m)}=m\cdot \w$
and $\nabla_{X_f^{(m)}}^{(m)}$ is the covariant derivative.
In the context of geometric quantization 
one has to choose a polarization.
This corresponds to the fact that the ``quantum states'', i.e. 
the sections of the quantum line bundle, should only depend on 
``half of the variables'' of the phase-space manifold $M$.
In general, such a polarization  will not be 
unique. 
But in our complex situation there is a 
canonical one by taking  the 
subspace of holomorphic sections.
This polarization is called {\it K\"ahler polarization}.
This means that we only take those sections which 
are constant in anti-holomorphic directions.
The operator of geometric quantization 
with K\"ahler polarization is  defined as
\begin{equation}\label{E:Geq}
Q_f^{(m)}:=\Pim P_f^{(m)}.
\end{equation}
By the surjectivity of the Toeplitz map there exists a function
$f_m$, depending on the level $m$, such that
$Q_f^{(m)}= T_{f_m}^{(m)}$.
The Tuynman lemma \cite{Tuyn} gives
\begin{equation}\label{E:tuyn}
Q_f^{(m)}=\iu\cdot T_{f-\frac 1{2m}\Delta f}^{(m)},
\end{equation}
where $\Delta$ is the Laplacian with respect to the K\"ahler metric given by
$\omega$.
It should be noted that for \refE{tuyn} the compactness of $M$ is essential.

As a consequence, which will be used later, 
the operators $Q_f^{(m)}$ and the $\Tfm$ have the same asymptotic 
behavior for $m\to\infty$. 
\end{remark}
\begin{remark}
(The non-compact situation.)
If our K\"ahler manifold is not necessarily compact 
then in a first step we consider as quantum space 
the space of bounded holomorphic sections $\ghmb$.
Next we have to restrict the space of quantizable functions
to a subspace of $\Cim$ such that the quantization map \refE{BT}
(now restricted) will
be well-defined.
One possible choice is the subalgebra of functions with
compact support.
After these restrictions the Berezin-Toeplitz operators are defined
as above. In the case of $M$ compact, everything reduces to the
already given objects.
Unfortunately, there is no general result like \refT{bms} valid for
arbitrary quantizable K\"ahler manifolds 
(e.g. for non-compact ones).
There are corresponding results for special important examples. 
But they are more or less shown by case by case studies of the
type of examples using tools exactly adapted to this situation.
See \cite{SchlBer} for references in this respect.
\end{remark}
\begin{remark}
(Auxiliary vector bundle.)
We return to the compact manifold case. It is also
possible to generalize the situation by considering an additional
auxiliary hermitian holomorphic line bundle $E$. The sequence of
quantum spaces is now the space of holomorphic sections
of the bundles $E\otimes L^m$. For the case that $E$ is a line bundle
this was done, e.g. by Hawkins \cite{Haw}, for the general
case by Ma and Marinescu, see  \cite{MaMarRev} for the details.
See also Charles \cite{Charhf}.
By the hermitian structure of $E$ we have a scalar product and
a corresponding projection operator from the space of all
sections to the space of holomorphic sections. 
The Toeplitz operator $\Tfm$ is defined for $f\in C^\infty(M,End(E))$.
The situation considered in this review is that $E$ equals the trivial
line bundle.
But similar results can be obtained in the more general set-up.
This is also true with respect to the star product discussed in
\refS{star}%
\footnote{ For $E$ not a line bundle the Berezin-Toeplitz star product
is a star product in $C^\infty(X,End(E))[[\nu]]$. This might be
considered
as a quantization with additional internal degrees of freedom, see
\cite[Remark 2.27]{MaMarRev}.}.
Of special importance, beside the trivial bundle case, is the case
when
the auxiliary vector bundle is a square root $L_0$ of the canonical
line bundle $K_M$, i.e. $L_0^{\otimes 2}=K_M$ (if the square root exists).
Recall that  $K_M=\bigwedge^n\Omega_M$, where $n=\dim_{\C}M$ and
$\Omega_M$ is the rang $n$ vector bundle of holomorphic
1-differentials.
The corresponding quantization is called quantization with metaplectic
corrections. It turns out that with the metaplectic correction the
quantization behaves better under natural constructions.
An example is the {\it Quantization Commutes with Reduction}
problem in the case that we have a well-defined action of a 
group $G$ on the compact (quantizable) K\"ahler manifold 
with $G$-equivariant quantum line bundle.
Under suitable conditions on the action we have a
linear isomorphy of the $G$-invariant subspace of the quantum spaces 
$\Ho^0(M,L^m)^G$ with the quantum spaces $\Ho^0(M//G,(L//G)^m)$. 
This was shown by Guillemin and Sternberg \cite{GS}.
But this
isomorphy is not unitary. If one uses the quantum spaces with
respect to the metaplectic correction then at least it is
asymptotically (i.e. $m\to\infty$) unitary.
This was shown independently%
\footnote{I am grateful to Xiaonan Ma for pointing this out to me.}
  and with slightly different aspects by
Ma and Zhang \cite{MaZh} (partly based on work of Zhang \cite{Zh})
and by  Hall and Kirwin \cite{HaKir}. See also  
\cite{MaMarBook}. 
For  interesting details about  these approaches see also the
article of Kirwin \cite{Kir} explaining some of the  relations.
For the general singular situation, see  Li
\cite{Li}.

Another case when the quantization with metaplectic correction is more
functorial is if one considers families of K\"ahler manifolds as they
show up e.g. in the context of deforming complex structures on
a given symplectic manifold. See work by 
Charles \cite{Charl} and Andersen, Gammelgaard and Lauridsen \cite{AnGam}.
\end{remark}
\section{Deformation quantization -- star products}\label{S:star}
\subsection{General definitions}\label{SS:star}
There is another approach to quantization. One deforms the commutative
algebra of functions ``into non-commutative directions given by
the Poisson bracket''. It turns out that this can only be done
on the formal level. One obtains a deformation
quantization, also called star product.
This notion was around quite a long time. 
See e.g. Berezin \cite{Berequ},\cite{Bergc},
Moyal \cite{Moy}, Weyl \cite{Weyl}, etc.
Finally, the notion was formalized in 
\cite{BFFLS}. 
See \cite{DiSt} for some historical remarks.

For a 
given  {Poisson algebra} $(\Cim, \cdot, \{\,,\,\})$ of
smooth functions on a manifold $M$,
a {\it star product} for $M$ is an {associative product}
$ \star$ on  $\A:=\Cim[[\nu]]$,
the space of formal power series with coefficients from
$\Cim$,
such that 
for $f,g\in\Cim$
\begin{enumerate}
\item
${f\star g= f \cdot g} {\mod \nu}$,
\item
${\left( f\star g-g\star f\right)/\nu
= -\mathrm{i}\{f,g\}}  {\mod \nu}$.
\end{enumerate}
The star product of two functions $f$ and $g$ 
can be expressed  as 
\begin{equation}\label{E:stara}
f\star g=\sum_{k=0}^\infty \nu^kC_k(f,g),\qquad C_k(f,g)\in\Cim,
\end{equation}
and is extended $\C[[\nu]]$-bilinearly.
It is called 
{differential} (or local) if
the $C_k(\,,\,)$ are bidifferential 
operators with respect to their entries.
If nothing else is said one requires $1\star f=f\star 1=f$, which 
is also called ``null on constants''.

\medskip
\begin{remark}(Existence)
Given a Poisson bracket, is there always a star product?
In the usual setting of deformation theory there always exists 
a trivial deformation. This is not the case here, as the trivial
deformation of $\Cim$ to  $\mathcal{A}$
extending
the point-wise product trivially to the power series, is not
allowed as it does not fulfill the second condition for the
commutator of being a star product
(at least not if the Poisson bracket is non-trivial).
In fact the existence problem is highly non-trivial.
In the symplectic case different existence proofs, from different perspectives,
were given by DeWilde-Lecomte \cite{DeWiLe},
Omori-Maeda-Yoshioka \cite{OMY},
and 
Fedosov \cite{Fed}.
The general Poisson case was settled by Kontsevich
\cite{Kont}.
For more historical information see the review 
\cite{DiSt}.
\end{remark}

\medskip

Two star products
$\star$ and $\star'$ for the same Poisson structure
are  called {\it equivalent} 
if and only if  there exists   a formal series of 
linear operators
$$
B=\sum_{i=0}^\infty B_i\nu^i,\qquad
B_i:\Cim\to\Cim,
$$
with $\ B_0=id\ $ such that
\quad {$ B(f)\star' B(g)=B(f\star g)$}.

To every equivalence class of a differential star product 
its {\it Deligne-Fedosov class} can be assigned. It is a formal de Rham class
of the form 
\begin{equation}\label{E:DFcl}
cl(\star)\in\frac {1}{\mathrm{i}}(\frac {1}{\nu}[\omega]
+\mathrm{H}^2_{dR}(M,\C)[[\nu]]).
\end{equation}
This assignment gives a 1:1 correspondence between
equivalence classes of star products and such formal forms.

\medskip

In the K\"ahler case we might look for star products adapted to
the complex structure.
Karabegov \cite{Kar1} introduced the notion of star products with
{\it separation of variables type} for differential star products.
The star product is of this type if in $C_k(.,.)$ for $k\ge 1$ 
the first argument is only differentiated in holomorphic 
and the second argument in anti-holomorphic directions.
Bordemann and Waldmann in their construction
\cite{BW} used the name {\it star product of Wick type}.%
\footnote{In Karabegov's original approach the role of holomorphic
and antiholomorphic variables are switched, i.e. in
the approach of Bordemann-Waldmann they are of anti-Wick type.
Unfortunately we cannot simply retreat to one these conventions,
as we really have to deal in the following with naturally defined
star products and relations between them, which are of separation
of variables type of both conventions.}
All such star products $\star$ are uniquely given 
(not only up to equivalence) by
their Karabegov form $kf(\star)$ which is a 
formal closed $(1,1)$ form.
We will return to it in \refSS{kara}

\subsection{The Berezin-Toeplitz deformation quantization}\label{SS:btq}
\begin{theorem}\label{T:star}
\cite{BMS},\cite{Schlbia95},\cite{Schlhab},\cite{Schldef},\cite{KS}
There exists  a unique differential  star 
\newline product 
\begin{equation}\label{E:star}
f\star_{BT} g=\sum \nu^kC_k(f,g)
\end{equation}
such that
\begin{equation}\label{E:starapp}
T_f^{(m)}T_g^{(m)}\sim \sum_{k=0}^\infty 
\left(\frac 1m\right)^k  T_{C_k(f,g)}^{(m)}.
\end{equation}
This star product is of 
separation  of variables type
with classifying {Deligne-Fedosov class} $cl$
and {Karabegov form} $kf$ 
\begin{equation}\label{E:BTclasses}
cl(\star_{BT})=
\frac {1}{\mathrm{i}}(\frac {1}{\nu}[\omega]-\frac {\delta}{2}),
\qquad
kf(\star_{BT})=\frac {-1}{\nu}\omega+\omega_{can}.
\end{equation}
\end{theorem}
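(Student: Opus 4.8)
The plan is to take \refT{bms} as the analytic foundation and to build the formal star product on top of it. First I would establish the asymptotic expansion \refE{starapp} itself, which is the crux of the matter. The idea is to realize the family $(\Tfm)_m$ as the modes of a single generalized Toeplitz operator on the disc bundle, in the sense of Boutet de Monvel--Guillemin \cite{BGTo}. Their symbol calculus shows that the composition of two such operators is again a generalized Toeplitz operator, whose complete symbol admits an asymptotic expansion; reading off this expansion mode by mode produces functions $C_k(f,g)$ with $\Tfm\Tgm\sim\sum_k m^{-k}\Tma{C_k(f,g)}$, and the locality of the symbol calculus forces each $C_k$ to be a bidifferential operator. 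This existence step is the main obstacle, since it is precisely where the microlocal machinery (or, alternatively, the off-diagonal Bergman kernel expansion of \cite{KS}) is indispensable; everything afterwards is essentially formal.

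Granting the expansion, I would next check that $\sum_k\nu^k C_k$ is a star product. The order-zero and order-one data come directly from \refT{bms}: the product estimate \refE{prod} gives $C_0(f,g)=f\cdot g$, and the commutator estimate \refE{dirac} gives $C_1(f,g)-C_1(g,f)=-\mathrm{i}\{f,g\}$, which are conditions (1) and (2) in the definition of a star product; null-on-constants is immediate since $T_1^{(m)}=\mathrm{id}$. Associativity is inherited from associativity of operator composition: both $(f\star g)\star h$ and $f\star(g\star h)$ reproduce the asymptotic expansion of $\Tfm\Tgm\Tma h$, so they must agree term by term. Uniqueness follows in the same spirit: if two differential star products both satisfy \refE{starapp}, the difference $D_k$ of their coefficients satisfies $\|\Tma{D_k(f,g)}\|=o(1)$ in the relevant order, and the lower norm bound in \refE{norma} forces $D_k(f,g)\equiv 0$.

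For the separation-of-variables property I would exploit two exact (non-asymptotic) identities. If $a$ is holomorphic then $a\cdot s$ is again holomorphic, so $\Tma a$ is plain multiplication by $a$ and $\Tgm\Tma a=\Tma{ga}$ holds on the nose; dually, for antiholomorphic $\bar a$ one has $\Tma{\bar a}=(\Tma a)^\ast$, and a short adjointness computation gives $\Tma{\bar a}\Tgm=\Tma{\bar a g}$. Feeding these into \refE{starapp} and comparing with $C_0=f\cdot g$ shows $C_k(g,a)=0$ when $a$ is holomorphic and $C_k(\bar a,g)=0$ when $\bar a$ is antiholomorphic, for all $k\ge1$. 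Since the $C_k$ are bidifferential, vanishing on all holomorphic (resp. antiholomorphic) entries means the second argument is differentiated only in antiholomorphic and the first only in holomorphic directions, which is exactly the required type.

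Finally, the classifying forms in \refE{BTclasses} are the most delicate point, and I would treat them via Karabegov's classification of separation-of-variables star products by their formal $(1,1)$-form. The leading pole term $-\nu^{-1}\omega$ is forced by the quantization condition \refE{quant}, since the curvature of $L^m$ is $m\omega$, while the subleading term $\omega_{can}$ — the curvature of the canonical bundle with the metric induced by $\omega$ — emerges from the first nontrivial coefficient in the asymptotic expansion of the Bergman kernel and of the Berezin transform \cite{KS}. Once the Karabegov form is pinned down, the Deligne-Fedosov class $cl(\star_{BT})=\frac{1}{\mathrm{i}}(\frac{1}{\nu}[\omega]-\frac{\delta}{2})$ follows from the general dictionary relating the two, with $\delta$ the class of the canonical bundle. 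I expect the honest extraction of $\omega_{can}$ from the kernel expansion to be the hardest book-keeping in the whole argument.
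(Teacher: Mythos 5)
Your existence argument, the verification of the star-product axioms from \refT{bms}, and the uniqueness argument via the lower bound in \refE{norma} all track the paper's own route (the paper builds the $C_k$ inductively as the symbols of the $S^1$-invariant order-zero operators $A_N=D_\varphi^N T_fT_g-\sum_{j<N} D_\varphi^{N-j}T_{C_j(f,g)}$ on the disc bundle, and defers associativity to \cite{Schldef}). The genuine gap is in your proof of the separation-of-variables property. The exact identities you invoke --- $\Tgm T_a^{(m)}=T_{ga}^{(m)}$ for holomorphic $a$ and $T_{\bar a}^{(m)}\Tgm=T_{\bar a g}^{(m)}$ for antiholomorphic $\bar a$ --- are only available for \emph{globally} defined holomorphic (resp.\ antiholomorphic) functions, since the Toeplitz quantization map is defined on $\Cim$. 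On a compact connected K\"ahler manifold the only global holomorphic functions are constants, so these identities degenerate to bilinearity and carry no information; separation of variables is a local statement about the bidifferential operators $C_k$ and cannot be tested against global holomorphic data. (Even granting local test functions, the inference ``$C_k(g,a)=0$ for all holomorphic $a$ implies the second argument is differentiated only antiholomorphically'' is too quick: it only shows that every term carries at least one antiholomorphic derivative in that slot, not that it carries no holomorphic ones.)

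The paper closes exactly this gap by a different mechanism: the twisted product $R^{(m)}(f,g)=\sm(\Tfm\Tgm)$ and the Berezin transform $\Imb$ are shown in \cite{KS} to admit complete asymptotic expansions (via the off-diagonal expansion of the Bergman kernel), and the resulting formal objects are matched against Karabegov's classification of separation-of-variables star products. That identification delivers locality, the separation-of-variables type, and the Karabegov form $-\nu^{-1}\omega+\omega_{can}$ in one stroke. Since your final paragraph already defers to this machinery for \refE{BTclasses}, the natural repair is to obtain the type statement from the same identification, rather than from exact multiplicative identities that do not exist on compact $M$.
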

\noindent
First, the asymptotic expansion in \refE{starapp} has to be understood
in a strong operator norm sense.
For  $f,g\in\Cim$ and for every $N\in\N$  we have
with suitable constants $K_N(f,g)$ for all $m$
\begin{equation}
\label{E:sass}
||T_{f}^{(m)}T_{g}^{(m)}-\sum_{0\le j<N}\left(\frac 1m\right)^j
T_{C_j(f,g)}^{(m)}||\le K_N(f,g) \left(\frac 1m\right)^N\ .
\end{equation}

Second,
the used forms, resp. classes are defined as follows.
 Let $K_M$ be the 
canonical line bundle of $M$, i.e. the $n^{th}$ exterior power of
the holomorphic bundle of 1-differentials. The canonical class $\delta$ is
the first Chern class of this line bundle, i.e. 
$\delta:= c_1(K_M)$.
If we take in $K_M$ the fiber metric coming from the 
Liouville form $\Omega$ then this defines a unique
connection and further a unique curvature $(1,1)$-form
$\w_{can}$. 
In our
sign conventions we have  $\delta=[\w_{can}]$.
The Karabegov form will be introduced in \refSS{kara}.

\begin{remark}
Using \refT{bms} and the Tuynman relation \refE{tuyn} one can show that
there exists a {star product $\star_{GQ}$}
given by asymptotic expansion of the product of geometric quantization
operators.
The star product 
$\star_{GQ}$ is {equivalent} to $\star_{BT}$, via the equivalence
$B(f):=(id-\nu\frac {\Delta}{2})f$.
In particular, it has the same Deligne-Fedosov class. But it 
is {not} of separation of variables type, see \cite{Schldef}. 
\end{remark}
\subsection{Star product of separation of variables type}\label{SS:kara}
%
In \cite{Kar1,Kar2} Karabegov not only gave the notion of
{\it separation of variables type}, but also a proof of 
existence of such formal star products for any 
K\"ahler manifold, whether compact, non-compact, quantizable, or
non-quantizable.
Moreover, he classified them completely as individual 
star product not only up to equivalence.

In this set-up it is quite useful to consider more generally 
pseudo-K\"ahler manifolds
$(M,\omega_{-1})$, i.e.
complex manifolds with a non-degenerate closed $(1,1)$-form $\omega_{-1}$ not
necessarily positive.
(In this context 
it is convenient to denote by $\w_{-1}$ the $\w$ we use
at other places of the article.)

A formal form 
\begin{equation}\label{E:defw}
\widehat{\omega}=
(1/\nu)\omega_{-1}+\omega_0+\nu\omega_1+\dots
\end{equation}
is
called a formal deformation of the form $(1/\nu)\omega_{-1}$ if the forms
$\omega_r,\ r\geq 0$, are closed but not necessarily nondegenerate
(1,1)-forms on $M$.
Karabegov showed that to every such $\widehat{\omega}$ there 
exists a star product $\star$. Moreover he showed
that all deformation quantizations with separation
of variables on the  pseudo-K\"ahler manifold $(M,\omega_{-1})$ are
bijectively parameterized by the formal deformations of the form
$(1/\nu)\omega_{-1}$. 
By definition the  {\it Karabegov form} 
$kf(\star):=\widehat{\omega}$, i.e.  it is 
taken to be the  $\widehat{\omega}$ defining $\star$.

Let us indicate the principal idea of the construction.
First, assume that we have such a star product 
$(\mathcal{A}:=\cim[[\nu]],\star)$. Then 
for $f,g\in\mathcal{A}$ the  operators of left
and right multiplication  $L_f,R_g$  are given by  
$L_fg=f\star g=R_gf$. The
associativity of the star-product $\star$ is equivalent to the
fact that $L_f$ commutes with $R_g$ for all $f,g\in{\mathcal{A}}$.
If a star  product is differential then 
$L_f,R_g$ are formal differential operators.
Now 
Karabegov constructs his star product 
associated to the deformation $\widehat{\w}$ in the following way.
First he chooses on every 
contractible
coordinate chart $U\subset M$ (with holomorphic
coordinates $\{z_k\}$)
its formal potential  
\begin{equation}\label{E:formp}
\widehat{\Phi}=(1/\nu)\Phi_{-1}+\Phi_0+\nu\Phi_1+\dots,
\qquad
\widehat{\omega}=i\partial\bar\partial\widehat{\Phi}.
\end{equation}
Then the construction is done in such a way that 
the left (right) multiplication operators 
$L_{\partial\widehat{\Phi}/\partial z_k}$ ($R_{\partial\widehat{\Phi}/\partial\bar z_l}$)
on $U$
are realized as formal differential operators
\begin{equation}
L_{\partial\widehat{\Phi}/\partial z_k}
=\partial\widehat{\Phi}/\partial z_k+\partial/\partial z_k, 
\quad\text{and}\quad 
R_{\partial\widehat{\Phi}/\partial\bar z_l}
=\partial\widehat{\Phi}/\partial\bar z_l+\partial/\partial\bar z_l.
\end{equation} 
The set
$\mathcal{L}(U)$ of all left multiplication
operators on $U$ is completely described as the
set of all formal differential operators
commuting with the point-wise multiplication
operators by antiholomorphic coordinates
$R_{\bar z_l}=\bar z_l$ and the operators
$R_{\partial\widehat{\Phi}/\partial\bar
z_l}$. From the knowledge of $\mathcal{L}(U)$ the 
star product on $U$ can be reconstructed.
This follows from the simple fact that $L_g(1)=g$ and 
$L_f(L_g)(1)=f\star g$.
The operator corresponding to the left multiplication
with the (formal) function $g$ can recursively (in the
$\nu$-degree)
be calculated from the fact that it commutes with 
the operators
 $R_{\partial\widehat{\Phi}/\partial\bar z_l}$.
The local
star-products agree on the intersections of the
charts and define the global star-product
$\star$ on $M$.
See the original work of Karabegov \cite{Kar1} for these
statements.

We have to mention that  this  original
 construction of Karabegov will yield a star product of separation of
variables type but with the role of holomorphic and antiholomorphic
variables switched.
This says 
  for any open
subset $U\subset M$ and any holomorphic function $a$ and
antiholomorphic function $b$ on $U$ the operators $L_a$ and
$R_b$ are the operators of point-wise multiplication by $a$ and
$b$ respectively, i.e., $L_a=a$ and $R_b=b$. 

The construction of Karabegov is on one side very universal without
any restriction on the (pseudo) K\"ahler manifold. But it does
not establish any connection to an operator representation.
The existence of such an operator representation is related in a
vague sense to the quantization condition.
The BT deformation quantization has such a relation and
singles out a unique star product. Modulo switching the role
of holomorphic and anti-holomorphic variable $\star_{BT}$ corresponds
to a unique Karabegov form. This form is given in \refE{BTclasses}.
The identification is done in \refSS{ident} further down.
That the form starts with $(-1/\nu)\omega$ is due to the fact that
the role of the variables have to be switched to end up
in Karabegov's classification.

\subsection{Karabegov's formal Berezin transform}\label{SS:Kbere}
%
Given  a pseudo-K\"ahler manifold $(M,\omega_{-1})$.
In the frame of his construction and classification  
Karabegov 
assigned to each star products $\star$ with the separation of
variables property the formal
{\it Berezin transform} $I_\star$. 
It is 
as the unique formal differential operator on
$M$ such that for any open subset $U\subset M$,
antiholomorphic functions $a$ and holomorphic
functions $b$ on $U$ the relation 
\begin{equation}\label{E:formd}
a\star b=I(b\cdot a)=I(b\star a),
\end{equation}
holds true.
The last equality is automatic and  is due to the fact, 
that by the separation of variables property $b\star a$ is the
point-wise
product $b\cdot a$. 
He shows 
\begin{equation}
I=\sum_{i=0}^{\infty} I_i\;\nu^i, \quad  I_i:\cim\to\cim,
\quad I_0=id,\quad I_1=\Delta .
\end{equation}
Let us summarize. Karabegov's classification gives 
for a fixed pseudo-K\"ahler manifold a
1:1 correspondence 
between 
\newline
(1) the set of star products with separation of variables
type in Karabegov convention and
\newline
(2) the set of formal deformations \refE{defw} of $\omega_{-1}$.
\newline
Moreover, the formal Berezin transform $I_\star$
determines the $\star$ uniquely.

\medskip
We will introduce further down a Berezin transform in 
the set-up of the BT quantization.
In \cite{KS} it is shown that its asymptotic expansion gives
a formal Berezin transform in the sense of Karabegov, associated to
a star product related to $\star_{BT}$ explained as 
follows.

\subsection{Dual and opposite star products}\label{SS:kdual}
Given for the pseudo-K\"ahler manifold $(M,\omega_{-1})$ 
a star product $\star$ of separation of variables type
(in Karabegov convention)
Karabegov defined with the help of $I=I_\star$  
the following associated star products. 
First the {\it dual}
star-product $\tilde\star$ on $M$ is defined for
$f,g\in \mathcal{A}$ by the formula 
\begin{equation}
f\,\tilde\star\, g=I^{-1}(I(g)\star I(f)).
\end{equation}
It is  a star-product with separation of
variables but now on the pseudo-K\"ahler manifold
$(M,-\omega_{-1})$.
Denote by $\tilde\omega=-(1/\nu)\omega_{-1}
+\tilde\omega_0+\nu\tilde\omega_1+\dots$ the
formal form parameterizing the star-product
$\tilde\star$.  By definition $\tilde\omega=kf(\tilde\star)$.  
Its  formal Berezin transform equals
$I^{-1}$, and thus the dual to $\ \tilde\star\ $ is
again $\ \star\ $.

Given a star product, the opposite star product is obtained
by switching the arguments. Of course the sign of the
Poisson bracket is changed.
Now we take the 
opposite of the dual
star-product, $\star'=\tilde\star^{op}$, given
by 
\begin{equation}\label{E:bteq}
f\star' g=g\,\tilde\star\, f=I^{-1}(I(f)\star I(g)).
\end{equation}
It  defines a deformation quantization with
separation of variables on $M$, but with the
roles of holomorphic and antiholomorphic
variables swapped - in contrast  to $\star$. 
But now the pseudo-K\"ahler 
manifold will be $(M,\omega_{-1})$.
Indeed  
the formal Berezin
transform $I$ establishes an equivalence of
the deformation quantizations $(\mathcal{A},\star)$ and
$(\mathcal{A},\star')$. 

\medskip
How is the relation to the Berezin-Toeplitz star product 
$\star_{BT}$ of \refT{star}?
There exists a certain formal deformation 
$\widehat{\w}$ of the form
$(1/\nu) \w$ which yields a star product $\star$ in the Karabegov
sense
\cite{KS}.
The opposite of its dual will be equal to the 
Berezin-Toeplitz star product, i.e. 
\begin{equation}
\star_{BT}\ =\ \tilde\star^{op}\ =\ \star '\ .
\end{equation}
The classifying Karabegov form $kf(\tilde\star)$  
will be the form \refE{BTclasses}.
Here we fix the convention that we take for determining
the Karabegov form of the BT star product the Karabegov form of
the opposite one to adjust to Karabegov's original convention, i.e.
\begin{equation}
kf(\star_{BT}):=kf(\star_{BT}^{op})=kf(\tilde\star).
\end{equation}
As $\tilde\star$ is a star product for the pseudo-K\"ahler manifold
$(M,-\omega)$ the $kf(\star_{BT})$   starts with $(-1/\nu)\omega$.

The formula \refE{bteq} gives an equivalence between
$\ \star\ $ and $\ \star_{BT}\ $  via $\ I\ $. 
Hence, we have for the Deligne-Fedosov class 
 $cl(\star)=cl(\star_{BT})$, see the formula \refE{BTclasses}.
We will identify $\widehat{\w}=kf(\star)$ in \refSS{ident}.
\section{Global Toeplitz operators}\label{S:globToe}
In this section we will indicate some parts of the proofs of
\refT{star} and \refT{bms}.
For this goal we consider 
the bundles $L^m$ over the compact K\"ahler
manifold $M$ as associated line bundles of one unique 
$S^1$-bundle over $M$.
The Toeplitz operator will appear as ``modes'' of a global Toeplitz
operator.
Moreover, we will need the same set-up to discuss coherent states, 
Berezin symbols, and the Berezin transform in the next sections.

\subsection{The disc bundle}\label{SS:disc}
%
Recall that our quantum line bundle $L$ was assumed to be  already
very ample.
We pass   to its {dual} line bundle
$\ (U,k):=(L^*,h^{-1})\ $ 
with dual metric $k$.
In the example of the projective space, 
the
quantum line bundle is the hyperplane section bundle and its
dual is the tautological line bundle.
Inside  the total space $U$, we consider the {circle bundle}
$$
Q:=\{\lambda\in U\mid k(\lambda,\lambda)=1\},
$$
and denote by 
$\tau:Q\to M$ (or $\tau:U\to M$)  the {projections}
to the base manifold $M$.

The bundle $Q$ is a {contact manifold}, i.e. there is a 1-form $\nu$
such that
\newline
$\mu=\frac 1{2\pi}\tau^*\Omega
\wedge \nu$ is a volume form
on $Q$. Moreover,
\begin{equation}\label{E:discint}
\int_Q(\tau^*f)\mu=\int_Mf\,\Omega,\qquad \forall f\in\Cim.
\end{equation}
Denote by $\Lqv$ the corresponding $L^2$-space 
on $Q$. 
Let {$\mathcal{H}$}  be the space of (differentiable)
functions on $Q$ which can be 
extended to holomorphic functions on the disc bundle 
(i.e. to the ``interior'' of the circle bundle),
and 
{$\Hm$} the subspace of $\mathcal{H}$ consisting of
  $m$-homogeneous functions on $Q$.
Here  
{$m$-homogeneous} means 
$\psi(c\la)=c^m\psi(\la)$.
For further reference let us introduce the
following (orthogonal) projectors:
 the 
 {\it  Szeg\"o projector} 
\begin{equation}\label{E:szeg}
\Pi:\Lqv\to \Hc, 
\end{equation}
and its components the {\it Bergman projectors}
\begin{equation}
\pimh:\Lqv\to\Hm.
\end{equation}

The bundle $Q$ is a {$S^1-$bundle}, and the $L^m$ are {associated} line bundles.
The sections of  $L^m=U^{-m}$ are 
identified with those functions $\psi$ on $Q$ 
which are {homogeneous of degree $m$}.
This identification is given
on the level of the $\mathrm{L}^2$ spaces 
by the map
\begin{equation}
\gamma_m:\Lpm \to \Lqv,\quad {s\mapsto \psi_s}\quad\text{where}
\end{equation}
\begin{equation}\label{E:ident}
\psi_s(\alpha)=\alpha^{\otimes m}(s(\tau(\alpha))).
\end{equation}
Restricted to the holomorphic sections we obtain the 
{unitary} isomorphism  
\begin{equation}\label{E:iso}
\gamma_m:\ghm \cong \Hm.
\end{equation}
\subsection{Toeplitz structure}\label{SS:toeplitz}
%
Boutet de Monvel  and
Guillemin introduced 
the notion of a Toeplitz structure $(\Pi,\Sigma)$ and associated 
generalized Toeplitz operators \cite{BGTo}.
If we specialize this to our situation
then 
$\Pi $ is the  Szeg\"o projector  \refE{szeg}
and $\Sigma$  is  the submanifold 
\begin{equation}
\Sigma:=\{\;t\nu(\lambda)\;|\;\lambda\in Q,\,t>0\ \}\ \subset\  T^*Q
\setminus 0
\end{equation}
of the tangent bundle of $Q$ 
defined with the help of the 1-form $\nu$.
It turns out that  $\Sigma$ is a 
symplectic submanifold,  a symplectic cone.

A (generalized) {\em Toeplitz operator} of order $k$ is  an operator
$A:\Hc\to\Hc$ of the form
$\ A=\Pi\cdot R\cdot \Pi\ $ where $R$ is a
pseudo-differential operator ($\Psi$DO)
of order $k$ on
$Q$.
The Toeplitz operators constitute a ring.
The  symbol of $A$ is the restriction of the
principal symbol of $R$ (which lives on $T^*Q$) to $\Sigma$.
Note that $R$ is not fixed by $A$, but Boutet de Monvel and
Guillemin showed that the  symbols
are well-defined and that they obey the same rules as the
symbols of   $\Psi$DOs.
In particular, the following relations are valid:
\begin{equation}
\label{E:symbol}
\sigma(A_1A_2)=\sigma(A_1)\sigma(A_2),\qquad
\sigma([A_1,A_2])=\iu\{\sigma(A_1),\sigma(A_2)\}_\Sigma.
\end{equation}
Here $\{.,.\}_{\Sigma}$ is the restriction of the canonical
Poisson structure of $T^*Q$ to $\Sigma$ coming from the
canonical symplectic form  on $T^*Q$.
Furthermore, a Toeplitz operator of order $k$ with vanishing
symbol is a Toeplitz operator of order $k-1$.

We will need the following 
two generalized Toeplitz operators:

(1) The generator of the circle action
gives the  operator $D_\varphi=\dfrac 1{\iu}\dfrac {\partial}
{\partial\varphi}$, where $\varphi$ is the angular variable. 
It is an operator of order 1 with symbol $t$.
It operates on $\Hm$ as multiplication by $m$.

(2) For $f\in\Cim$ let $M_f$ be the  operator on
$\Lqv$ 
corresponding to multiplication with $\tau^*f$.
We set
\begin{equation}
T_f=\Pi\cdot M_f\cdot\Pi:\quad \Hc\to\Hc\ .
\end{equation}
As $M_f$ is constant along the fibers of $\tau$, 
the operator $T_f$ 
commutes with the circle action.
Hence we can decompose
\begin{equation}
T_f=\prod\limits_{m=0}^\infty\Tfm\ ,
\end{equation}
where $\Tfm$ denotes the restriction of $T_f$ to $\Hm$.
After the identification of $\Hm$ with $\ghm$ we see that these $\Tfm$
are exactly the Toeplitz operators  $\Tfm$ introduced in \refS{btq}.
We call   $T_f$   the global Toeplitz operator and
the $\Tfm$ the local Toeplitz operators.
The operator $T_f$ is  of order $0$.
Let us denote by
$\ \tau_\Sigma:\Sigma\subseteq T^*Q\to Q\to M$ the composition
then we obtain for its symbol  $\sigma(T_f)=\tau^*_\Sigma(f)$.
\subsection{The construction of the BT star product}\label{SS:const}
%
To give a sketch of the 
proof of \refT{star} we will need the statements of \refT{bms}.
The part (a) of this theorem we will show 
with the help of the asymptotic expansion of the
Berezin transform in \refSS{norm}.
The other parts will be sketched here, too.
Full proofs of \refT{star} can be found in \cite{Schldef},
\cite{Schlhab}.
Full proofs of \refT{bms} in \cite{BMS}.

Let the notation be as  in the last subsection.
In particular, let $T_f$ be the 
Toeplitz operator, $D_\varphi$ the operator of rotation, and 
$\Tfm$, resp. $(m\cdot)$ their projections on the eigenspaces 
$\Hm\cong\ghm$.

\medskip
{\bf (a) The definition of the  $C_j(f,g)\in \Cim$}
\newline
The construction is done inductively in such a way
that 
\begin{equation}
\label{E:Asum}
A_N=D_\varphi^N T_fT_g- \sum_{j=0}^{N-1}
D_\varphi^{N-j}T_{C_j(f,g)}
\end{equation}
is always a  Toeplitz operator of order zero.
The operator $A_N$ is $S^1$-invariant, i.e.
$D_\varphi\cdot A_N=A_N\cdot D_\varphi$. 
As it is of order zero his symbol 
is a function on $Q$. By the $S^1$-invariance the symbol is even 
given by (the pull-back of) a function on $M$.
We take  this function as  next element $C_N(f,g)$
in the star product.
By construction, the 
operator  $\ A_N-T_{C_N(f,g)}\ $ is of order   $-1$ and 
$A_{N+1}=D_\varphi(A_N-T_{C_N(f,g)})$ is of order  0 
and exactly of the form given 
in \refE{Asum}.

The  induction starts with
\begin{gather}
A_0=T_fT_g, \qquad\mathrm{and}\
\\
\sigma(A_0)=\sigma(T_f)\sigma(T_g)=
\tau^*_{\Sigma}(f)\cdot 
\tau^*_{\Sigma}(g)
=
\tau^*_{\Sigma}(f\cdot g)\ .
\end{gather}
Hence, $C_0(f,g)=f\cdot g$ as required.
\newline
It remains to show statement \refE{sass}
about the asymptotics.
As an operator of order zero on a compact manifold
$A_N$ is bounded 
($\Psi$DOs of order 0 on compact manifolds are bounded).
By the  $S^1$-invariance  we can write
$A=\prod_{m=0}^\infty A^{(m)}$
where $A^{(m)}$ is the restriction of $A$ on the 
orthogonal subspace $\Hm$.
For the norms we get $\ ||A^{(m)}||\le ||A||$.
If we calculate the restrictions we obtain
\begin{equation}
\label{E:azwi}
||m^N\Tfm\Tgm-\sum_{j=0}^{N-1}m^{N-j}T^{(m)}_{C_j(f,g)}||
=||A_N^{(m)}||\le ||A_N||\ .
\end{equation}
After dividing by $m^N$ Equation \refE{sass} follows.
Bilinearity is clear.
For $N=1$ we obtain \refE{prod} and \refT{bms}, Part (c).

\medskip
{\bf (b) The Poisson structure}

First we  sketch the proof for \refE{dirac}.
For a fixed $t>0$ 
\begin{equation}
\Sigma_t:=\{t\cdot \nu(\la)\mid \la\in Q\}\quad \subseteq \Sigma.
\end{equation}
It turns out that 
 $\ {\omega_\Sigma}_{|\Sigma_t}=-t\tau_\Sigma^*\omega\ $.
The commutator
$[T_f,T_g]$ is a  Toeplitz operator of order $-1$.
From the above  we obtain
with \refE{symbol} for the 
symbol of the commutator 
\begin{equation}
\sigma([T_f,T_g])(t\nu(\lambda))=\iu\{\tau_\Sigma^* f,\tau_\Sigma^*g
\}_\Sigma(t\nu(\lambda))=
-\iu t^{-1}\{f,g\}_M(\tau(\lambda))\ .
\end{equation}
We consider the Toeplitz operator
\begin{equation}
A:=D_\varphi^2\,[T_f,T_g]+\iu D_\varphi\, T_{\{f,g\}}\ .
\end{equation}
Formally this is an operator of order 1.
Using $\ \sigma(T_{\{f,g\}})=\tau^*_\Sigma \{f,g\}$ 
and $\sigma(D_\varphi)=t$ we see that its principal
symbol vanishes. Hence
it is an operator of order 0.
Arguing as above we consider its components $A^{(m)}$
and get $\ ||A^{(m)}||\le ||A||$.
Moreover, 
\begin{equation}
A^{(m)}=A_{|\Hm}=m^2[\Tfm,\Tgm]+\iu m\Tfgm.
\end{equation}
Taking the norm bound and dividing it by $m$ we get 
part (b) of \refT{bms}.
Using \refE{iso} the norms involved indeed coincide.

\medskip
For the star product we have to show that
$C_1(f,g)-C_1(g,f)=-\iu\{f,g\}$.
We write explicitly  \refE{azwi} 
for  $N=2$ and the pair of functions
$(f,g)$:
\begin{equation}
||m^2 \Tfm\Tgm-m^2 T^{(m)}_{f\cdot g}-m T^{(m)}_{C_1(f,g)}||
\le K\ .
\end{equation}
A corresponding expression is obtained for the pair  $(g,f)$.
If we subtract both operators inside of the norm 
we obtain (with a suitable $K'$)
\begin{equation}
||m^2 (\Tfm\Tgm-\Tgm\Tfm)-m (T^{(m)}_{C_1(f,g)}-T^{(m)}_{C_1(g,f)})||
\le K'\ .
\end{equation}
Dividing by $m$ and multiplying with $\iu$ we obtain
\begin{equation}
||m\iu [\Tfm,\Tgm]-T^{(m)}_{\iu\big(C_1(f,g)-C_1(g,f)\big)}||
=O(\frac 1m)\ .
\end{equation}
Using the asymptotics given by \refT{bms}(b) for 
the commutator we get
\begin{equation}
|| T^{(m)}_{\{f,g\}-\iu\big(C_1(f,g)-C_1(g,f)\big)}||=O(\frac 1m)\ .
\end{equation}
Taking the limit for $m\to\infty$ and using 
\refT{bms}(a) we get
\begin{equation}\ ||\{f,g\}-\iu(C_1\big(f,g)-C_1(g,f)\big)||_\infty=0\ . 
\end{equation}
Hence indeed, $\ \{f,g\}=\iu(C_1(f,g)-C_1(g,f))$.
For the associativity and further results, see \cite{Schldef}.

\bigskip
Within this approach the calculation of the coefficient
functions $C_k(f,g)$ is recursively and not really 
constructive. In \refSS{calc} we will show another way 
how to calculate the coefficients. It is based on the
asymptotic expansion of the Berezin transform, which itself
is obtained via the off-diagonal expansion of the
Bergman kernel.

In fact the Toeplitz operators again can be expressed via
kernel functions also  related to the Bergman kernel.
In this way certain extensions of the presented results
are possible. See in particular work by Ma and 
Marinescu for compact symplectic manifolds and orbifolds.
One might consult the review 
\cite{MaMarRev} for results and further references.

For another approach (still symbol oriented) to
Berezin Toeplitz operator and star product quantization see
Charles \cite{Char}, \cite{Chardiss}.

 \section{Coherent  states and symbols}\label{S:coh}
Berezin constructed for an important but limited classes of
K\"ahler manifolds a star product. The construction was based
on his covariant symbols given for domains in $\C^n$.
In the following we will present their definition for
arbitrary compact quantizable K\"ahler manifolds.
\subsection{Coherent states}\label{SS:cohs}
%
We look again at the relation \refE{ident}
\begin{equation*}
\psi_s(\alpha)=\alpha^{\otimes m}(s(\tau(\alpha))),
\end{equation*}
but now from the 
point of view of the linear evaluation functional.
This means,
we {fix}
${\al}\in U\setminus 0$ and vary the sections $s$. 

The {\it  coherent vector (of level m)}  associated to
the point $\al\in U\setminus 0$ is the  element {$\eam$}
of $\ghm$ with 
\begin{equation}\label{E:cohv}
\skp{{\eam}}{s}=\psi_s(\alpha)=\alpha^{\otimes m}(s(\tau({\alpha})))
\end{equation}
for all $s\in\ghm$.
A direct verification shows
$e_{c\alpha}^{(m)}={\bar c}^m\cdot \eam$ for
$c\in\C^*:=\C\setminus\{0\}$. 
Moreover, as the bundle is very ample we get $\eam\ne 0$.

This allows 
the following 
definition.

\begin{definition}\label{D:cohs}
The {\it  coherent state (of level m)}  associated to
$x\in M$ is  the projective class
\begin{equation}\label{E:cohs}
{\e^{(m)}_x}:= [\eam]\in\Pro(\ghm),\qquad \al\in\tau^{-1}(x), \al\ne 0.
\end{equation}
\end{definition}
The {\it  coherent state embedding} is the 
antiholomorphic embedding
\begin{equation}\label{E:cohse}
M\quad \to\quad \Pro(\ghm)\ \cong\ \pnc[N],
\qquad 
x\mapsto [e^{(m)}_{\tau^{-1}(x)}].
\end{equation}
See \cite{BerSchlcse} for some geometric properties of the
coherent state embedding.

\begin{remark}
A coordinate independent version of Berezin's original
definition and extensions
to line bundles were given by  
Rawnsley \cite{Raw}.
It plays an important role in the work of Cahen, Gutt, and
Rawnsley on the quantization of K\"ahler manifolds 
\cite{CGR1,CGR2,CGR3,CGR4},
 via Berezin's covariant symbols.
In these works 
the coherent vectors are parameterized by the elements of
$L\setminus 0$.
The  definition here uses the points of the total space of the 
dual bundle $U$. It  has the advantage that one can consider
all tensor powers of $L$ together on an equal footing.
\end{remark}
\subsection{Covariant Berezin symbol}\label{S:cov}
\begin{definition}
For an operator $A\in\eghm$ its 
{\it covariant Berezin symbol $\sigma^{(m)}(A)$  (of level $m$)}
is defined as the 
function
\begin{equation}\label{E:covsym}
{\sigma^{(m)}(A)}:M\to\C,\quad
x\mapsto \sm(A)(x):=
\frac {\skp {\eam}{A\eam}}{\skp {\eam}{\eam}},\quad
\alpha\in\tau^{-1}(x)\setminus\{0\}.
\end{equation}
\end{definition}
Using the 
{\it coherent projectors} (with the convenient bra-ket notation) 
\begin{equation}\label{E:cohp}
P^{(m)}_{x}=\frac {|\eam\rangle\langle \eam|}{\langle
  \eam,\eam\rangle},\qquad \alpha\in\tau^{-1}(x)
\end{equation}
it can be rewritten as 
$
\sm(A)=\Tr(AP^{(m)}_x)
$.
In abuse of notation $\alpha\in\tau^{-1}(x)$ should always mean
$\alpha\ne 0$.

\subsection{Contravariant Symbols}\label{SS:contra}
%
We need
{Rawnsley's  epsilon function} $\epsm$ \cite{Raw}
to introduce contravariant symbols in the general
K\"ahler manifold setting. 
It is defined as
\begin{equation}\label{E:rawe}
{\epsm}: M\to\cim,\quad 
x\mapsto {\epsm(x)}:=\frac
{h^{(m)}(\eam,\eam)(x)}
{\skp {\eam}{\eam}},\ \al\in\tau^{-1}(x).
\end{equation}
As $\epsm>0$ we can introduce the {modified 
measure}
$
{\Ome(x):=\epsm(x)\Om(x)
}$
on the  space of functions on $M$.
If $M$ is a homogeneous manifold under a transitive 
group action and everything is invariant,
$\epsm$ will be constant. This was the case 
considered by Berezin.

\begin{definition}
Given an operator  $A\in\eghm$ then {\bf a} 
 { contravariant Berezin symbol} {$\svm(A)\in\cim$
of $A$}  is defined by the {representation} of the operator
$A$ {as an integral}
\begin{equation}\label{E:contra}
A=\int_M\svm(A)(x)P^{(m)}_x\,\Ome(x),
\end{equation}
if such a representation exists.
\end{definition}
We quote from \cite[Prop. 6.8]{SchlBer} that
the {Toeplitz operator $\Tfm$} admits  such a representation  with
${\svm(\Tfm)=f}$. This says,  the function $f$ 
itself is {a} contravariant symbol of the Toeplitz operator
$\Tfm$. Note that the contravariant symbol is not uniquely
fixed by the operator.
As an immediate consequence from the surjectivity of the
Toeplitz map it follows that
every operator $A$
 has a contravariant 
symbol, i.e. every operator $A$ has a representation \refE{contra}.
For this we have to keep in mind, that our K\"ahler manifolds are
compact.

\medskip

Now we introduce
 on $\eghm$  the {Hilbert-Schmidt norm} 
$
\skps {A}{C}{HS}=Tr(A^*\cdot C).
$
In \cite{Schlbia98} (see also \cite{Schlgeoquant}), we showed that 
\begin{equation}\label{E:adj} 
{\skps {A}{\Tfm}{HS}=\skpsm {\sm(A)} {f} {\epsilon} \ .}
\end{equation}
This says that the {Toeplitz map} $f\to \Tfm$ and the {covariant symbol map}
$A\to\sm(A)$ are {adjoint}.
By the adjointness property from the surjectivity of the
Toeplitz map 
the following follows.
\begin{proposition}\label{P:inj}
The covariant symbol map
is injective.
\end{proposition}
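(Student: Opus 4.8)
The plan is to deduce injectivity of the covariant symbol map $A\mapsto\sm(A)$ directly from the adjointness relation \refE{adj} together with the surjectivity of the Toeplitz map established in \refP{tsur}. Since the map is linear, it suffices to show that its kernel is trivial, i.e. that $\sm(A)=0$ forces $A=0$. So first I would take an arbitrary operator $A\in\eghm$ with $\sm(A)=0$ and work toward the conclusion $A=0$.

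The heart of the argument is to pair $A$ against Toeplitz operators. By the adjointness identity \refE{adj} we have, for every $f\in\Cim$,
\begin{equation*}
\skps{A}{\Tfm}{HS}=\skpsm{\sm(A)}{f}{\epsilon}=\skpsm{0}{f}{\epsilon}=0,
\end{equation*}
because $\sm(A)$ vanishes identically. Thus $A$ is Hilbert--Schmidt orthogonal to every Toeplitz operator $\Tfm$.

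Now I would invoke \refP{tsur}: on the fixed level $m$ the Toeplitz map $f\mapsto\Tfm$ is surjective onto $\eghm$ (recall that $M$ is compact, so $\ghm$ is finite-dimensional and the whole operator algebra is a matrix algebra). Hence every operator $C\in\eghm$ can be written as $C=\Tfm$ for some $f\in\Cim$, and the computation above gives $\skps{A}{C}{HS}=0$ for all $C\in\eghm$. Choosing in particular $C=A$ yields $\skps{A}{A}{HS}=\Tr(A^*A)=0$, and since the Hilbert--Schmidt inner product is positive definite this forces $A=0$.

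The point I expect to carry the real weight is the surjectivity input from \refP{tsur}: it is exactly what upgrades the statement ``$A$ is orthogonal to all \emph{Toeplitz} operators'' into ``$A$ is orthogonal to all operators,'' after which positive-definiteness of the Hilbert--Schmidt norm closes the argument immediately. There is no genuine analytic obstacle here; the only thing to be careful about is that both ingredients live on a single fixed level $m$, where the finite-dimensionality guarantees that the Hilbert--Schmidt pairing and the trace are unambiguously defined.
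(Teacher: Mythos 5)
Your argument is correct and is precisely the one the paper intends: the paper derives \refP{inj} by combining the adjointness relation \refE{adj} with the surjectivity of the Toeplitz map (\refP{tsur}), exactly as you do, merely without writing out the details. Your write-up just makes explicit the step from ``orthogonal to all Toeplitz operators'' to ``orthogonal to all of $\eghm$'' and the positive-definiteness of the Hilbert--Schmidt pairing on the finite-dimensional algebra.
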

Other results following from the adjointness are
\begin{equation}\label{E:trace}
{\tr (\Tfm)}=\int_M f\;\Ome=\int_M\sm(\Tfm)\;\Ome.
\end{equation}
\begin{equation}
{\dim\ghm} =\int_M\Ome=\int_M\epsilon^{(m)}(x)\;\Omega.
\end{equation}
In particular, in the special case that 
{$\epsilon^{(m)}(x)=const$}\quad then 
\begin{equation}
{\epsilon^{(m)}}=\frac {\dim\ghm}{vol_{\Omega}(M)}.
\end{equation}
\subsection{The original Berezin star product}\label{SS:borg}
%
Under very 
restrictive conditions on the manifold
it is possible to construct 
the {\it Berezin star product}
with the help of the covariant symbol map.
This was done by Berezin himself \cite{Berequ},\cite{Beress}
and later by Cahen, Gutt, and Rawnsley 
\cite{CGR1}\cite{CGR2}\cite{CGR3}\cite{CGR4} for more examples.
We will indicate this in the following.

Denote by {$\A^{(m)}\ \le \Cim$,} the subspace of
functions which appear as 
level $m$ covariant symbols of operators.
By  \refP{inj}
for the two symbols
 $\sm(A)$ and $\sm(B)$ the 
operators $A$ and $B$ are uniquely fixed.
Hence, it is possible to define 
the deformed product by 
 \begin{equation}
\sm(A)\star_{(m)}\sm(B):=\sm(A\cdot B).
\end{equation}
Now  $\star_{(m)}$ defines on $\A^{(m)}$ 
 an  associative and noncommutative
product.

It is even possible to give an  expression for
the resulting symbol.
For this we introduce 
the {\it two-point function}
\begin{equation}\label{E:two}
\psi^{(m)}(x,y)=
\frac {\langle \eam, \ebm\rangle \langle \ebm,\eam
\rangle }{\langle \eam,\eam\rangle \langle \ebm,\ebm \rangle }
\end{equation}
with $\al=\tau^{-1}(x)$ and $\beta=\tau^{-1}(y)$.
This function is well-defined on $M\times M$.
Furthermore, we have the {\it two-point symbol}
\begin{equation}\label{E:twos}
\sm(A)(x,y)
=\frac {\langle \eam,A \ebm \rangle }
{\langle \eam, \ebm \rangle }.
\end{equation}
It is the analytic extension of the real-analytic covariant symbol.
It is well-defined on an open dense subset of $M\times M$
containing
the diagonal.
Then
\begin{multline}\label{E:bsint}
\sm(A)\star_{(m)}\sm(B)(x)= \sm(A\cdot B)(x)=
\frac {\langle \eam,A\cdot B\, \eam\rangle }
{\langle \eam,\eam\rangle }
\\
=
\frac 1{\langle \eam,\eam\rangle }\int_M
{\langle \eam,A\ebm\rangle \langle \ebm,B\eam\rangle}
\frac {\Ome(y)}{\langle \ebm,\ebm\rangle }
\\
=
\int_M \sm(A)(x,y)\cdot\sm(B)(y,x)\cdot \psi^{(m)}(x,y)\cdot
\Ome(y)\ .
\end{multline}

The crucial problem is how to relate different levels $m$ to define 
for all possible symbols a unique product not depending on $m$.
In certain special situations like those studied by 
Berezin, and Cahen, Gutt and Rawnsley
the subspaces are nested into each other and the union
$\A=\bigcup_{m\in\N}\A^{(m)}$ is a dense subalgebra of $\Cim$.
This is the case if
  the manifold is a homogeneous manifold and 
the epsilon function $\epsm$ is 
a constant.
A detailed analysis shows that in this case  a
star product is given.

For related results see also work of Moreno and Ortega-Navarro 
\cite{MoOr}, \cite{Mor1}.
In particular,  also the work of Engli\v s
\cite{Engbk,Eng2,Eng1,Eng0}. 
Reshetikhin and Takhtajan \cite{ResTak} gave a construction
of a (formal) star product using formal integrals 
(and associated Feynman graphs) in 
the spirit of the Berezin's covariant symbol  construction,
see \refSS{resta}

In \refSS{btstarg}
using the Berezin transform and its properties 
discussed in the next section 
(at least in the case of quantizable
compact K\"ahler manifolds) 
we will 
introduce a star product dual to  the by \refT{star} 
existing $\star_{BT}$. It  
will generalizes the above star product.
\section{The Berezin transform and Bergman kernels}
\label{S:bere}
\subsection{Definition and asymptotic expansion of the Berezin
  transform}\label{SS:bere}
%
\begin{definition}
The map 
\begin{equation}\label{E:btrans}
{I^{(m)}}:\Cim\to\Cim,\qquad
f\mapsto
{\Imb(f)}:=\sm(\Tfm),
\end{equation}
obtained by 
starting with a function $f\in\Cim$, taking its 
Toeplitz operator $\Tfm$, and then calculating the
covariant symbol
is called the {\it Berezin transform (of level $m$)}.
\end{definition}
To distinguish it from the formal Berezin transforms introduced by
Karabegov for any of his star products sometimes we will call the
above the geometric Berezin transform.
Note that it is uniquely fixed by the geometric 
setup of the quantized K\"ahler manifold.

{}From the point of view of Berezin's approach 
the operator $T_f^{(m)}$ has as a  contravariant symbol $f$.
Hence $\Imb$ gives a correspondence between contravariant symbols
and covariant symbols of operators.
The Berezin transform was introduced and studied by 
Berezin \cite{Beress} for certain classical symmetric 
domains in $\C^n$. These results where 
extended by Unterberger and Upmeier \cite{UnUp},
see also Engli\v s \cite{Eng1,Eng2,Engbk} 
and Engli\v s and Peetre \cite{EnPe}.
Obviously, the Berezin transform makes perfect  sense
in the compact K\"ahler case which we
consider here.

\begin{theorem}\label{T:btrans}
\cite{KS}
Given $x\in M$ then the {Berezin transform} 
$\Imb(f)$  has a complete {asymptotic 
expansion} in powers of $1/m$ as $m\to\infty$
\begin{equation}\label{E:btransas}
\Imb(f)(x)\quad\sim \quad\sum_{i=0}^\infty I_i(f)(x)\frac {1}{m^i}\ , 
\end{equation}
where   $I_i:\cim\to\cim$ are linear maps 
given by differential operators, uniformly defined for all $x\in M$.
Furthermore,
$
{I_0(f)=f,\quad I_1(f)=\Delta f}.
$
\end{theorem}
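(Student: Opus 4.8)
The plan is to realize the geometric Berezin transform as an integral operator whose kernel concentrates on the diagonal as $m\to\infty$, and then to extract the asymptotic expansion by a Laplace-type analysis built on the off-diagonal expansion of the Bergman kernel from \cite{KS}.

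First I would put $\Imb$ into integral form. Since $\Tfm=\Pim(f\cdot)$ and $\eam$ is holomorphic, the projector acts as the identity in the pairing, so from \refE{covsym}
\begin{equation}
\Imb(f)(x)=\frac{\langle \eam,f\cdot\eam\rangle}{\langle \eam,\eam\rangle}
=\frac{1}{\langle \eam,\eam\rangle}\int_M f(y)\,h^{(m)}(\eam,\eam)(y)\,\Omega(y).
\end{equation}
Evaluating the section $\eam$ against a unit dual vector $\beta\in\tau^{-1}(y)$ via the defining relation \refE{cohv} gives $h^{(m)}(\eam,\eam)(y)=|\langle \ebm,\eam\rangle|^2$ and $\epsm(y)=\langle \ebm,\ebm\rangle$; combining these with the two-point function \refE{two} and the measure $\Ome=\epsm\,\Omega$ yields
\begin{equation}
\Imb(f)(x)=\int_M \psi^{(m)}(x,y)\,f(y)\,\Ome(y).
\end{equation}
So $\Imb$ is the integral operator with kernel $\psi^{(m)}(x,y)\,\epsm(y)$ against the fixed Liouville volume. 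Taking $f\equiv 1$ and using $\svm(\Tma 1)=1$ together with \refE{trace}-type normalization shows $\Imb(1)=1$; this total-mass-one property will force $I_0=\mathrm{id}$.

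Next I would feed in the Bergman asymptotics. The scalar products $\langle \eam,\ebm\rangle$ are analytic continuations of the Bergman kernel of $\ghm$, so both $\epsm$ (its diagonal value, \refE{rawe}) and the two-point function $\psi^{(m)}$ are governed by the behaviour of the Bergman kernel near the diagonal; in particular $\psi^{(m)}(x,x)=1$, $0\le\psi^{(m)}<1$ off the diagonal, and $\epsm\sim m^n$. The central input is the off-diagonal expansion of \cite{KS}: in K\"ahler normal coordinates centered at $x$, with the rescaling $y=x+u/\sqrt m$, the kernel $\psi^{(m)}(x,y)\,\epsm(y)$ takes the form of a Gaussian $e^{-|u|^2}$ times a full asymptotic series in half-integer powers of $1/m$, with coefficients polynomial in $u$ and smooth in $x$, while away from any fixed neighborhood of the diagonal it is $O(m^{-\infty})$. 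Localizing the integral near $y=x$, substituting the rescaled coordinate, and Taylor-expanding $f$ about $x$ reduces the computation to Gaussian moments in $u$. Odd moments vanish by the evenness of the weight, so only integer powers of $1/m$ survive, producing the asserted expansion with each $I_i$ a differential operator whose coefficients are universal curvature expressions at $x$; smoothness and uniformity in $x$ follow from the uniformity of the Bergman expansion over the compact $M$. The leading Gaussian integral together with $\Imb(1)=1$ gives $I_0(f)=f$, and matching the second-order Taylor term against the second moments produces $I_1(f)=\Delta f$.

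The main obstacle is the off-diagonal Bergman expansion itself, together with the uniform remainder estimates needed both to justify the $O(m^{-\infty})$ decay away from the diagonal and to integrate the rescaled series term by term; this is precisely the technical content imported from \cite{KS}. Once that expansion is available with the stated uniformity, the surviving steps are bookkeeping of Gaussian moments, and the identification $I_1=\Delta$ is a direct computation using the normal-coordinate form of the metric.
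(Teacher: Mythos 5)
Your proposal follows essentially the same route as the paper: you derive the integral representation of $\Imb$ in terms of Bergman--kernel data (this is exactly \refP{kernelint}, since $\epsm(y)=u_m(y)$ turns your kernel $\psi^{(m)}(x,y)\,\epsm(y)$ into $v_m(x,y)/u_m(x)$), and you then feed in the off-diagonal expansion of the Bergman kernel near the diagonal together with a Laplace/stationary-phase analysis, which is precisely the technical content the paper also imports from \cite{KS}. The only real difference is at the end: you identify $I_0=\mathrm{id}$ and $I_1=\Delta$ by direct Gaussian-moment bookkeeping in normal coordinates, whereas \cite{KS} identifies the full expansion (and thereby all the $I_i$ as differential operators) by matching it with Karabegov's formal Berezin transform of an explicitly specified star product of separation of variables type; both endgames are legitimate.
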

Here $\Delta$ is the  {Laplacian} with respect
to the metric given by the  K\"ahler form $\w$.
By {\it  complete asymptotic expansion}
the following is understood.
Given $f\in\Cim$, $x\in M$ and an $N\in\N$ then there
exists  a positive constant $A$ such that
$$
{\left|\Imb(f)(x)- \sum_{i=0}^{N-1} I_i(f)(x)\frac {1}{m^i}\right|}_{\infty}
\quad\le\quad \frac {A}{m^N}\ .
$$
The proof of  this theorem is quite involved. An important
intermediate step of independent interest is the 
off-diagonal asymptotic expansion of the {Bergman kernel function} {in the
neighborhood of the diagonal}, see \cite{KS}.
We will discuss this in the next subsection.
\subsection{Bergman kernel}\label{SS:berg}
%
Recall from \refS{globToe} the definition of 
the Szeg\"o projectors 
$ \Pi:\Lqv\to \Hc$ and its components 
$\pimh:\Lqv\to\Hm$, the Bergman projectors.
The Bergman projectors have smooth integral kernels,
the {\it Bergman kernels} 
$\Bm(\alpha,\beta)$ defined on $Q\times Q$, i.e.
\begin{equation}
\pimh(\psi)(\alpha)=\int_Q\Bm(\alpha,\beta)\psi(\beta)\mu(\beta).
\end{equation}
The Bergman kernels can be expressed with the help of the
coherent vectors.
\begin{proposition}\label{P:kernel}
\begin{equation}
\Bm(\alpha,\beta)=\psi_{\ebm}(\alpha)=
\overline{\psi_{\eam}(\beta)}=\skp{\eam}{\ebm}.
\end{equation}
\end{proposition}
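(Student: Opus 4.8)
The plan is to identify the Bergman kernel $\Bm(\alpha,\beta)$ with the coherent vectors by unwinding the definitions already set up in the excerpt. The key observation is that the Bergman projector $\pimh$ is simply the Szeg\"o projector restricted to the $m$-homogeneous subspace $\Hm$, which under the unitary isomorphism $\gamma_m:\ghm\cong\Hm$ of \refE{iso} corresponds to the orthogonal projector $\Pim:\Lpm\to\ghm$ of \refE{proj}. So the reproducing property of $\pimh$ should be matched against the explicit basis expansion \refE{proje} and against the defining property \refE{cohv} of the coherent vectors.

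First I would fix an orthonormal basis $s_l^{(m)}$, $l=1,\dots,N(m)$, of $\ghm$ and push it through $\gamma_m$ to get an orthonormal basis $\psi_{s_l^{(m)}}$ of $\Hm$. Writing the projector $\pimh$ via this basis, its integral kernel is
\begin{equation}
\Bm(\alpha,\beta)=\sum_{l=1}^{N(m)}\psi_{s_l^{(m)}}(\alpha)\,\overline{\psi_{s_l^{(m)}}(\beta)}\ ,
\end{equation}
which is just the coordinate form of the reproducing kernel. Second, I would recognize the right-hand side using the defining relation \refE{cohv} of the coherent vector: since $\skp{\ebm}{s}=\psi_s(\beta)$ for every $s\in\ghm$, expanding $\ebm$ itself in the orthonormal basis gives $\ebm=\sum_l\overline{\psi_{s_l^{(m)}}(\beta)}\,s_l^{(m)}$ (using the anti-linearity convention of the footnote), so that $\psi_{\ebm}(\alpha)=\sum_l\overline{\psi_{s_l^{(m)}}(\beta)}\,\psi_{s_l^{(m)}}(\alpha)$. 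This is exactly the displayed kernel, proving $\Bm(\alpha,\beta)=\psi_{\ebm}(\alpha)$.

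For the remaining equalities I would argue as follows. The identity $\skp{\eam}{\ebm}=\psi_{\ebm}(\alpha)$ is literally the defining equation \refE{cohv} applied to the holomorphic section $s=\ebm$ (legitimate since $\ebm\in\ghm$), evaluated at the point $\alpha$. The equality $\psi_{\ebm}(\alpha)=\overline{\psi_{\eam}(\beta)}$ is then the Hermitian symmetry of the reproducing kernel, which follows either from the self-adjointness of the orthogonal projector $\pimh$ or equivalently from taking the complex conjugate of $\skp{\eam}{\ebm}=\overline{\skp{\ebm}{\eam}}$.

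I do not expect a genuine obstacle here; the statement is essentially a repackaging of the reproducing-kernel formalism. The one point requiring care is bookkeeping of the anti-linearity convention (the scalar product is anti-linear in its \emph{first} argument, per the footnote to \refE{proje}), since this governs which factor carries the complex conjugate and hence the precise placement of $\alpha$ versus $\beta$ in each expression. Getting the conjugates consistent with \refE{proje} and \refE{cohv} is the only step where a sign-or-conjugation slip could occur.
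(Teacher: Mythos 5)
Your proof is correct, and it is essentially the argument the paper relies on: the text does not reproduce a proof but refers to \cite{KS} and \cite{Schlbol}, where the identity is obtained exactly by this reproducing-kernel computation — writing the finite-rank projector $\pimh$ in the orthonormal basis $\psi_{s_l^{(m)}}$ transported by the unitary $\gamma_m$, matching the resulting kernel with the basis expansion of $\ebm$ coming from \refE{cohv}, and finishing with Hermitian symmetry of the scalar product. Your handling of the anti-linearity convention (conjugates on the $\beta$-factor) is consistent throughout, so there is nothing to correct.
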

\noindent
For the proofs of this and the following propositions see
\cite{KS}, or \cite{Schlbol}.

Let $x,y\in M$ and choose $\alpha,\beta\in Q$ with
$\tau(\alpha)=x$ and $\tau(\beta)=y$ then the functions
\begin{equation}\label{E:um}
u_m(x):=\Bm(\alpha,\alpha)=
\skp{\eam}{\eam},
\end{equation}
\begin{equation}\label{E:vm}
v_m(x,y):=\Bm(\alpha,\beta)\cdot \Bm(\beta,\alpha)=
 \skp{\eam}{\ebm}\cdot \skp{\ebm}{\eam}
\end{equation}
are well-defined on $M$ and on $M\times M$ respectively.
The following proposition gives an integral representation of the
Berezin transform.
\begin{proposition}\label{P:kernelint}
\begin{equation}\label{E:kernelint}
\begin{aligned}
\left(\Imb(f)\right)(x)&=\frac 1{\Bm(\alpha,\alpha)}
\int_Q \Bm(\alpha,\beta)\Bm(\beta,\alpha)\tau^*f(\beta)\mu(\beta)
\\
&=
\frac 1{u_m(x)}
\int_M v_m(x,y)f(y)\Omega(y)\ .
\end{aligned}
\end{equation}
\end{proposition}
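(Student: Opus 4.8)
\emph{Proof proposal.} The plan is to unwind the definition of the Berezin transform and then transport the computation to the circle bundle $Q$, where \refP{kernel} turns every scalar product of coherent vectors into a value of the Bergman kernel. By the definitions \refE{btrans} and \refE{covsym},
\[
\Imb(f)(x)=\sm(\Tfm)(x)=\frac{\skp{\eam}{\Tfm\eam}}{\skp{\eam}{\eam}},\qquad \alpha\in\tau^{-1}(x)\cap Q .
\]
The denominator is immediately $\skp{\eam}{\eam}=\Bm(\alpha,\alpha)=u_m(x)$ by \refP{kernel} and \refE{um}, so the whole problem reduces to rewriting the numerator $\skp{\eam}{\Tfm\eam}$ as the claimed integral.

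For the numerator I would first note that $\Tfm\eam=\Pim(f\cdot\eam)$, and since $\eam$ lies in the image of the self-adjoint orthogonal projection \refE{proj} we may drop it: $\skp{\eam}{\Pim(f\cdot\eam)}=\skp{\eam}{f\cdot\eam}$. Now transport to $Q$ by the unitary $\gamma_m$ of \refE{iso}. From \refE{ident} one reads off pointwise (for any section $s$) that $\gamma_m(f\cdot s)=(\tau^*f)\cdot\gamma_m(s)=M_f\gamma_m(s)$, and $\gamma_m$ carries $\Pim$ to the Bergman projector $\pimh$. Writing $\psi_{\eam}=\gamma_m(\eam)\in\Hm$ and using unitarity of $\gamma_m$ together with $\pimh\psi_{\eam}=\psi_{\eam}$,
\[
\skp{\eam}{f\cdot\eam}=\skp{\psi_{\eam}}{M_f\,\psi_{\eam}}=\int_Q \overline{\psi_{\eam}(\beta)}\,(\tau^*f)(\beta)\,\psi_{\eam}(\beta)\,\mu(\beta),
\]
the scalar product on $Q$ being that of $\Lqv$, anti-linear in the first slot. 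By \refP{kernel} one has $\overline{\psi_{\eam}(\beta)}=\Bm(\alpha,\beta)$ and $\psi_{\eam}(\beta)=\Bm(\beta,\alpha)$; substituting and dividing by $u_m(x)$ yields exactly the first line of \refE{kernelint}.

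Finally I would push the integral down to $M$ using \refE{discint}. The point to check is that the integrand $\Bm(\alpha,\beta)\Bm(\beta,\alpha)\,(\tau^*f)(\beta)$, as a function of $\beta\in Q$ with $\alpha$ fixed, is the pullback of a function on $M$. Since $e^{(m)}_{c\beta}=\bar c^{\,m}\ebm$ gives $\Bm(\alpha,c\beta)\Bm(c\beta,\alpha)=|c|^{2m}\Bm(\alpha,\beta)\Bm(\beta,\alpha)$, the product is invariant along the circle action on $Q$ and descends to the well-defined function $v_m(x,\cdot)$ of \refE{vm}, whose pullback is precisely $\Bm(\alpha,\beta)\Bm(\beta,\alpha)$. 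Applying \refE{discint} to $v_m(x,\cdot)\,f$ then converts the $Q$-integral into $\int_M v_m(x,y)f(y)\Omega(y)$, giving the second line. The main obstacle here is purely bookkeeping: keeping track of the homogeneity degrees and the anti-linearity convention, and confirming that $\gamma_m$ genuinely intertwines multiplication and projection on the relevant $L^2$ spaces rather than only on the holomorphic subspaces. \qed
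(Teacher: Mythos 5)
Your argument is correct and is exactly the standard derivation: reduce $\Imb(f)(x)$ to $\skp{\eam}{f\cdot\eam}/\skp{\eam}{\eam}$ using self-adjointness of $\Pim$, rewrite numerator and denominator via \refP{kernel} as Bergman-kernel integrals over $Q$, and descend to $M$ by the $S^1$-invariance of $|\psi_{\eam}|^2$ together with \refE{discint}. The paper itself delegates the proof to \cite{KS} and \cite{Schlbol}, where the same computation is carried out, so there is nothing to add beyond the bookkeeping points you already flag (isometry of $\gamma_m$ on all of $\Lpm$, which follows from \refE{discint}, and the homogeneity check).
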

Typically, asymptotic expansions can be obtained using 
stationary phase integrals. But for such an asymptotic expansion
of the integral representation of the Berezin transform we
will not only need an asymptotic expansion of the Bergman kernel
along the diagonal (which is well-known) but in a
neighborhood of it.
This is one of the key results obtained in \cite{KS}.
It is based on works of Boutet de Monvel and Sj\"ostrand 
\cite{BS} on the Szeg\"o kernel and in generalization of a result of
Zelditch \cite{Zel} on the Bergman kernel on the diagonal.
The integral representation 
is used then to prove
the existence  of the 
asymptotic expansion of the Berezin transform.
See \cite{Schlbol} for a sketch of the proof.

Having such an asymptotic expansion it still remains to
identify its terms.
As it was explained in \refSS{kara}, 
Karabegov assigns to every
formal 
deformation quantizations 
with the ``separation of variables'' property 
a  {\em formal Berezin transform} $I$. 
In \cite{KS} it is shown that 
there is an explicitely specified 
star product $\ \star\ $ (see Theorem 5.9 in \cite{KS})
with associated formal Berezin transform such that 
if we replace 
$\frac 1m$ by the formal variable $\nu$ in the asymptotic  expansion of
the 
Berezin transform $\Imb f(x)$ we obtain
$I(f)(x)$.
This will finally prove \refT{btrans}.
We will exhibit the star product $\ \star\ $ in \refSS{ident}.
\medskip

Of course, for certain restricted but important non-compact
cases the Berezin transform was already introduced and 
calculated by Berezin. It was a basic tool in his approach to
quantization \cite{Berbt}.
For other  types of non-compact manifolds similar 
results on the asymptotic expansion of the Berezin transform 
are also known.
See the extensive work of Engli\v s, e.g. \cite{Eng1}.

\begin{remark}
More recently, direct approaches to the asymptotic expansion of
the Bergman kernel (outside the diagonal) 
were given, some of them yielding  
low order coefficients of the expansion.
As examples, let me mention
Berman, Berndtsson, and Sj\"ostrand, \cite{BBS}, Ma and
Marinescu \cite{MaMarBook}, Dai. Lui, and Ma \cite{DLM}. See also 
Engli\v s  \cite{Eng0}.
\end{remark}

\subsection{Proof of norm property of Toeplitz operators}
\label{SS:norm}
%
In \cite{Schlbia98} I conjectured \refE{btransas} 
(which we later proved in  joint work with Karabegov)
and showed how such an asymptotic expansion supplies a different
proof  of \refT{bms}, Part (a).
For completeness I  reproduce the proof here.
\begin{proposition}
\begin{equation}\label{E:symine}
|\Imb(f)|_\infty=|\sm(\Tfm)|_\infty\quad\le \quad||\Tfm||\quad\le\quad   
|f|_\infty\ .
\end{equation}
\end{proposition}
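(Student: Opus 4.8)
The plan is to read the claimed chain of relations left to right and dispatch each piece separately; all three are elementary once the right structural fact is invoked. The leftmost equality $|\Imb(f)|_\infty=|\sm(\Tfm)|_\infty$ is nothing but the definition \refE{btrans} of the Berezin transform, so no work is required there, and I would simply point to it.

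For the right-hand inequality $||\Tfm||\le|f|_\infty$, I would use that $\Tfm=\Pim(f\cdot)$ factors as multiplication by $f$ followed by the orthogonal projection $\Pim$. Since $\Pim$ is an orthogonal projection it is norm-nonincreasing, so for $s\in\ghm$ one has $||\Tfm s||=||\Pim(f s)||\le||f s||$. Then I expand this $\mathrm{L}^2$-norm via \refE{skp}: because $f$ is a scalar function, pointwise $\hm(f s,f s)=|f|^2\,\hm(s,s)$, whence $||f s||^2=\int_M|f|^2\,\hm(s,s)\,\Omega\le|f|_\infty^2\,||s||^2$. Dividing by $||s||^2$ and taking the supremum over $s\ne 0$ gives $||\Tfm||\le|f|_\infty$.

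For the middle inequality $|\sm(\Tfm)|_\infty\le||\Tfm||$, I would observe that the covariant symbol \refE{covsym} is a normalized diagonal matrix coefficient, i.e.\ a Rayleigh-type quotient. Fixing $x$ and a representative $\alpha\in\tau^{-1}(x)$, Cauchy--Schwarz gives $|\skp{\eam}{\Tfm\eam}|\le||\eam||\cdot||\Tfm\eam||\le||\eam||^2\,||\Tfm||$, so that the quotient $\sm(\Tfm)(x)=\skp{\eam}{\Tfm\eam}/\skp{\eam}{\eam}$ is bounded in modulus by $||\Tfm||$ uniformly in $x$; taking the sup over $x$ closes the estimate. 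I would note that the quotient does not depend on the chosen $\alpha$ over $x$, since by the scaling $e_{c\alpha}^{(m)}=\bar c^{\,m}\eam$ recorded after \refE{cohv} the factors $|c|^{2m}$ cancel between numerator and denominator, so $\sm(\Tfm)$ is a well-defined function on $M$.

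The point to flag is that there is no genuine obstacle here: every step is a one-line estimate, and the content of the proposition is structural rather than technical. Its interest lies downstream: combined with the asymptotic expansion \refT{btrans}, for which $I_0(f)=f$ forces $|\Imb(f)|_\infty\ge|f|_\infty-A/m$, this sandwich yields $|f|_\infty-A/m\le||\Tfm||\le|f|_\infty$ and hence both the limit $||\Tfm||\to|f|_\infty$ and the lower bound in \refE{norma} of \refT{bms}(a). In writing it up I would therefore keep the argument short and make sure the two endpoints of the chain are literally the two quantities that must be compared in that later application.
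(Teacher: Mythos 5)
Your proposal is correct and follows essentially the same route as the paper: the middle inequality via Cauchy--Schwarz applied to the matrix coefficient $\skp{\eam}{\Tfm\eam}$, and the right-hand inequality by bounding the Toeplitz operator through the multiplication operator, whose norm is at most $|f|_\infty$ by the pointwise identity $h^{(m)}(f\varphi,f\varphi)=|f|^2h^{(m)}(\varphi,\varphi)$. Your added remarks on the well-definedness of the symbol and on the downstream use with the Berezin transform match what the paper records elsewhere.
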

\begin{proof}
Using Cauchy-Schwarz inequality we calculate ($x=\tau(\alpha)$)
\begin{equation}
| \sm(\Tfm)(x)|^2=
\frac {|\skp {\eam}{\Tfm\eam}|^2}{{\skp {\eam}{\eam}}^2}\le
\frac {\skp {\Tfm\eam}{\Tfm\eam}}{\skp {\eam}{\eam}}\le
||\Tfm||^2\ .
\end{equation}
Here the last inequality  follows from the definition of the operator norm.
This shows the first inequality in \refE{symine}.
For the second inequality introduce the multiplication
operator $M_f^{(m)}$ on $\gulm$. Then 
$\ ||\Tfm||=||\Pim\,M_f^{(m)}\,\Pim||\le ||M_f^{(m)}||\ $ and
for  $\varphi\in\gulm$,  $\varphi\ne 0$
\begin{equation}
\frac {{||M_f^{(m)} \varphi||}^2}{||\varphi||^2}=
\frac {\int_M h^{(m)}(f \varphi,f \varphi)\Omega}
 {\int_M h^{(m)}(\varphi,\varphi)\Omega}
=
\frac {\int_M f(z)\overline{f(z)}h^{(m)}(\varphi,\varphi)\Omega}
{\int_M h^{(m)}(\varphi,\varphi)\Omega}
\le
|f|{}_\infty^2\ .  
\end{equation}
Hence,
\begin{equation}
||\Tfm||\le ||M_f^{(m)}||=\sup_{\varphi\ne 0}
\frac {||M_f^{(m)}\varphi||}{||\varphi||}\le |f|_\infty .
\end{equation}
\end{proof}
\begin{proof} (\refT{bms} Part (a).)
Choose as $x_e\in M$ a point with $|f(x_e)|=|f|_\infty$.
From the fact that the  Berezin
transform has
as leading term the identity   it follows that
$\ |(I^{(m)}f)(x_e)-f(x_e)|\le C/m\ $ with a suitable constant
$C$.
Hence,
$\ \left| |f(x_e)|-|(I^{(m)}f)(x_e)| \right| \le C/m\ $
and 
\begin{equation}\label{E:absch}
|f|_\infty-\frac Cm=|f(x_e)|-\frac Cm\quad\le\quad
|(I^{(m)}f)(x_e)|\quad\le\quad |I^{(m)}f|_\infty\ .
\end{equation}
Putting \refE{symine} and \refE{absch} together we obtain
\begin{equation}\label{E:thma}
|f|_\infty-\frac Cm\quad\le\quad ||T_f^{(m)}||\quad\le\quad
|f|_\infty\ .
\end{equation}
\end{proof}
\section{Berezin transform and star products}\label{S:bers}
\subsection{Identification of the BT star product}\label{SS:ident}
%
In \cite{KS} there was another object introduced, the
{\it twisted product}
\begin{equation}
R^{(m)}(f,g):=\sm(\Tfm\cdot \Tgm) \ .
\end{equation}
Also for it the 
existence of a complete asymptotic expansion was shown.
It was identified with a twisted formal product. 
This  allowed relating  the BT star product with 
a special star product  within the classification of
Karabegov. From this  the properties
of \refT{star} of
locality, separation of variables type, and the calculation to
the classifying forms and classes for the BT star product
follows.

As already announced in \refSS{kara},
the BT star product $\star_{BT}$ is the opposite of the dual star product
of a certain star product $\star$.
To identify $\star$ we will give its 
classifying Karabegov form $\widehat{\w}$ .
As already mentioned above, Zelditch \cite{Zel} proved that
the function $u_m$ \refE{um} has a complete asymptotic expansion 
in powers of $1/m$. 
In detail he showed
\begin{equation}
u_m(x)\sim m^n\sum_{k=0}^\infty \frac 1{m^k}\,b_k(x), \quad
b_0=1.
\end{equation}
If we replace in the expansion 
$1/m$ by the formal variable $\nu$ we obtain a formal function
$s$
defined by
\begin{equation}
e^{s}(x)=
\sum_{k=0}^\infty \nu^k\,b_k(x).
\end{equation}
 Now take as formal potential  \refE{formp}
$$\widehat{\Phi}=\frac 1\nu \Phi_{-1}+s,
$$  
where $\Phi_{-1}$ is the local K\"ahler potential of the 
K\"ahler form $\w=\w_{-1}$.
Then $\widehat{\w}=\mathrm{i}\;
\partial\bar\partial\widehat{\Phi}$.
It might also be written in the form
\begin{equation}\label{E:fk}
\widehat{\w}=\frac 1\nu \w+\F
(\mathrm{i}\;\partial\bar\partial\log\Bm(\al,\al)).
\end{equation}
Here we denote the replacement of $1/m$ by 
the formal variable $\nu$ by the symbol $\F$.
\subsection{The Berezin star products for arbitrary K\"ahler
  manifolds}\label{SS:btstarg}
%
We will  introduce for general quantizable compact
K\"ahler manifolds the Berezin star product.
We extract  from the asymptotic expansion of the Berezin transform 
\refE{btransas}
the {formal expression}
\begin{equation}\label{E:btform}
I=\sum_{i=0}^{\infty} I_i\;\nu^i, \quad  I_i:\cim\to\cim,
\end{equation}
as a {\it formal Berezin transform},
and set
\begin{equation}\label{E:dbs}
f\star_{B} g:=I(I^{-1}(f)\star_{BT}I^{-1}(g)).
\end{equation}
As $I_0=id$ this  {$\star_B$} is a star product
for our K\"ahler manifold, which we call the  
{\it Berezin star product}.
Obviously, the formal map 
$I$ gives the {equivalence transformation } to {$\star_{BT}$}.
Hence, the Deligne-Fedosov classes will be the same.
It will be of separation of variables type but with the
role of the variables switched.
We showed in \cite{KS} that $I=I_\star$ with star product given by
the form \refE{fk}.
We can rewrite \refE{dbs} as
\begin{equation}\label{E:btb}
f\star_{BT} g:=I^{-1}(I(f)\star_{B}I(g)).
\end{equation}
and get exactly the relation \refE{bteq}.
Hence,  $\star=\star_B$ and both star products $\star_B$ 
and $\star_{BT}$ are dual and opposite to each other.

When the definition with the covariant symbol {works} 
(explained in \refSS{borg}) $\star_B$ will
coincide with the star product defined there.

\subsection{Summary of 
naturally defined star products for compact
K\"ahler manifolds}\label{SS:summary}
%
By the presented techniques we obtained
for quantizable compact K\"ahler manifolds three different
naturally defined star products 
$\star_{BT}$,
$\star_{GQ}$, and
$\star_{B}$.
All three are equivalent and have classifying Deligne-Fedosov
class
\begin{equation}\label{E:ksepch}
cl(\star_{BT})=
cl(\star_{B})=
cl(\star_{GQ})=
\frac {1}{\mathrm{i}}(\frac {1}{\nu}[\omega]-\frac {\delta}{2}).
\end{equation}
But all three are distinct. In fact $\star_{BT}$ is of
separation of variables type (Wick-type), 
 $\star_{B}$ is of
separation of variables type with the role of the
variables switched (anti-Wick-type),
and $\star_{GQ}$ neither.
For their Karabegov forms we obtain
(see \cite{KS},\cite{SchlBer})
\begin{equation}\label{E:kf}
kf(\star_{BT})=
\frac {-1}{\nu}\omega+\omega_{can}.
\qquad
kf(\star_{B})=\frac 1{\nu}{\w}+\F(\mathrm{i}\,\dpa\db\log u_m).
\end{equation}
The function $u_m$ was introduced above as the
Bergman kernel evaluated along the diagonal in $Q\times Q$.
\begin{remark}
Based on Fedosov's method Bordemann and Waldmann \cite{BW} constructed 
also a unique star product $\star_{BW}$ which is of Wick type, see \refSS{bw}.
The opposite star product has Karabegov form 
$kf(\star_{BW}^{opp})=-(1/\nu)\,\w$ and it has Deligne-Fedosov class
$cl(\star_{BW})=\frac {1}{\mathrm{i}}(\frac {1}{\nu}[\omega]+\frac
{\delta}{2})$  \cite{Karacmf}.
It will be equivalent to $\star_{BT}$ if the canonical class is trivial.

More precisely, in \cite{Karacmf} 
Karabegov considered the ``anti-Wick'' variant
of the Bordemann-Waldmann construction. This yields a star product
with
separation of variables in the convention of Karabegov. It has 
Karabegov form $(1/\nu)\w$ and the same 
Deligne-Fedosov class as  \refE{ksepch}. Hence, it is equvialent to
$\star_{BT}$.
Recently,  in 
\cite{Kar3}, \cite{Kar4}  
Karabegov  gave a more direct construction of
the star product with Karabegov form $(1/\nu)\w$. 
Karabegov calls this star product {\it standard star product}.
\end{remark}
\subsection{Application: Calculation of the coefficients of the star
  products}\label{SS:calc}
%
The proof of \refT{star} gives a recursive definition of the
coefficients
$C_k(f,g)$. Unfortunately, it is not very  constructive. 
For their calculation the Berezin transform will also be of help.
\refT{btrans} shows for quantizable compact K\"ahler manifolds the
existence
of the asymptotic expansion of the Berezin transform \refE{btransas}.
We get the formal Berezin transform $I=\F(I^{(m)})$,
see \refE{btform}, which is the formal Berezin transform of the star
product
$\star_B$
\begin{equation*}
I=\sum_{i=0}^{\infty} I_i\;\nu^i, \quad  I_i:\cim\to\cim.
\end{equation*}
We will show that if 
we know  $I$ explicitely we obtain explicitly $\star_B$ by giving the
coefficients $C_k^{B}(f,g)$ of $\star_B$.
For this the knowledge of the coefficients
 $C_k^{BT}(f,g)$ for  $\star_{BT}$ will not be needed. All 
we need is the existence of $\star_{BT}$ to define $\star_{B}$.
The operators $I_i$ can be expressed (at least in principle) by the
asymptotic expansion of expressions formulated in terms of
 the Bergman kernel.

As $I$ is the formal Berezin transform in 
the sense of Karabegov assigned to  $\star_B$
we get for  local functions $f,g$ , $f$ anti-holomorphic, $g$ holomorphic
\begin{equation}\label{E:formd1}
f\star g=I(g\cdot f)=I(g\star f).
\end{equation}
Expanding the formal series
 for $\star_B$ \refE{stara}
and for 
$I$ \refE{btform} we get for the coefficients
\begin{equation}
C_k^B(f,g)=I_k(g\cdot f).
\end{equation}
Let us take local complex coordinates. As $\star_B$ is a differential
star product, the $C_k^B$ are bidifferential operators. As $\star_B$
is of separation of variables type, in  $C_k^B$ 
the first argument is is only 
differentiated with respect to anti-holomorphic coordinates, the second
with respect to holomorphic coordinates.
Moreover, it was shown by Karabegov
that the $C_k$ are
bidifferential
operators of order $(0,k)$ in 
the first argument and order $(k,0)$ in the second argument
and that $I_k$ is a
differential
operator of type $(k,k)$.

As $f$ is anti-holomorphic, in $I_k$ it will only see the anti-holomorphic
derivatives. The corresponding  is true for the holomorphic $g$.
By locality it is enough to consider the local functions
$z_i$ and $\zb_i$ and we get that 
$C_k^B$ can be obtained by ``polarizing'' $I_k$.

In detail,
if we write $I_k$ as summation over multi-indices 
$(i)$ and $(j)$  we get
\begin{equation}\label{E:rec1}
I_k=\sum_{(i),(j)}a_{(i),(j)}^{k}
\frac {\partial^{(i)+(j)}}
{\partial z_{(i)}\partial \zb_{(j)}},\quad
a_{(i),(j)}^{k}\in\Cim
\end{equation}
and obtain for the coefficient in the star product $\star_B$
\begin{equation}\label{E:rec2}
C_k^B(f,g)=\sum_{(i),(j)}a_{(i),(j)}^{k}
\frac {\partial^{(j)}f}
{\partial  \zb_{(j)}}
\,\frac {\partial^{(i)}g}
{\partial z_{(i)}},
\end{equation}
where the summation is limited by the order condition.
Hence, knowing the components $I_k$ of the formal
Berezin transform $I$ gives us 
$C_k^B$. 

{}From $I$ we can recursively calculate the coefficients
of the inverse $I^{-1}$ as $I$ starts with $id$. 
{}From $f\star_{BT}g=I^{-1}(I(f)\star_BI(g))$, which is  the Relation \refE{dbs} 
inverted,
we can calculate 
(at least recursively) the coefficients $C_k^{BT}$.
In practice, the recursive calculations turned out to become
quite involved.

\medskip
The chain of arguments presented above was based on
the existence of the Berezin transform and its asymptotic expansion 
for every 
quantizable compact K\"ahler manifold.
The asymptotic expansion of the  Berezin transform itself is again 
based on the  asymptotic off-diagonal expansion of the Bergman
kernel. Indeed, the Toeplitz operator can also be expressed
via the Bergman kernel.
Based on this it is clear that the same procedure will work
for  those non-compact manifolds for which  we have 
at least the same (suitably adapted) objects and corresponding
results. 
\begin{remark}
In the purely formal star product setting studied by
Karabegov \cite{Kar1} the set of star products of separation of variables type,
the set of formal Berezin transforms, and the 
set of formal Karabegov forms are in 1:1 correspondence.
Given $I_\star$ the star product $\star$ can be recovered via the 
correspondence \refE{rec1} with \refE{rec2}.
What generalizes  $\star_{BT}$ in this more general setting
is the dual and opposite of
$\star$.
\end{remark}

\begin{example}
As a  simple 
but nevertheless instructive case let us consider $k=1$.
Recall that $n$ is the complex dimension of $M$. 
Starting from our K\"ahler form $\omega$ expressed in local
holomorphic coordinates $z_i$ as 
$\omega=\iu\sum_{i,j=1}^{n}g_{ij}dz_i\wedge d\zb_j$ the
Laplace-Beltrami operator is given by
\begin{equation}\label{E:delta}
\Delta=\sum_{i,j}g^{ij}\frac {\partial^2}{\partial z_i\partial
  \zb_j},
\end{equation}
here $(g^{ij})$ is as usual the inverse matrix to $(g_{ij})$.
\footnote{From the context it should be clear  that $g$ and
  $g_{ij}$ are unrelated objects.}
The Poisson bracket is given  (up to  $\epsilon$ which is 
a factor of signs, 
complex units, and factors of 1/2 due to preferred conventions) by
\begin{equation}\label{E:pois}
\{f,g\}=\epsilon\cdot\sum_{i,j}g^{ij}
\left(\frac {\partial f}{\partial \zb_i}
 \frac{\partial g}{\partial z_j} 
-
\frac{\partial f}{\partial z_j}
\frac{\partial g}{\partial \zb_i}
\right)
\end{equation}
From $I_1=\Delta$ we deduce immediately with \refE{delta}
\begin{equation}\label{E:c1b}
C_1^B(f,g)=
\sum_{i,j}g^{ij}
\frac {\partial f}{\partial \zb_i}
 \frac{\partial g}{\partial z_j}.
\end{equation}
The inverse of $I$ starts with $id-\Delta \nu+....$.
If we isolate using \refE{btb} from 
\begin{equation}
(id-\Delta \nu)(((id+\Delta \nu)f)\star_B
((id+\Delta \nu)g))
\end{equation}
the terms of order one in $\nu$ we get
\begin{equation}\label{E:c1bt}
C_1^{BT}(f,g)=
C_1^B(f,g)+(\Delta f)g+f(\Delta g)-\Delta (fg)
=
-\sum_{i,j}g^{ij}
\frac {\partial f}{\partial z_i}
 \frac{\partial g}{\partial \zb_j}.
\end{equation}
This is of course not  a surprise.
We could have it deduced also directly. Our star products are
of separation of variables type and the $C_1$ have to have a form
like \refE{c1b} (or \refE{c1bt}) with coefficients $a^{ij}$ which
a priori could be different from $g^{ij}$ and $-g^{ij}$ respectively.
From $C_1(f,g)-C_1(g,f)=-\iu\{f,g\}$ 
it follows that they are equal.
\end{example} 
Calculating  the higher orders can become quite tedious.
First of course the Berezin transform is only known in closed
form for certain homogeneous spaces. For general (compact) manifolds
by \refP{kernelint}
its asymptotic expansion can be expressed in terms of
asymptotic expansions of the Bergman kernel. The 
Bergman kernel can be expressed locally with respect to 
adapted coordinates via data associated to the K\"ahler metric.
Hence the coefficients $C_k^B$ and $C_k^{BT}$ can be also
expressed in these data. 
In case that the Berezin transform exists it was an important
achievement of Mirek Engli{\v s} to exploit this in detail
also in the noncompact case, under the condition that the
Berezin transform exists
\cite{Eng0}, \cite{Engbk}. He calculated small 
order terms in the star products.

Later, Marinescu and Ma 
used also  Bergman kernel techniques in a different way 
even in the 
case of compact symplectic manifolds and orbifolds
and allowing an auxiliary vector  bundles.
In their approach they introduced Toeplitz kernels 
and calculated small order terms for the Berezin-Toeplitz star
product \cite{MaMar}. A Berezin transform does not show up.
See \cite{MaMarRev} for a review of their techniques, results
and further reference to related literature.
See also results of Charles 
\cite{Chardiss}, \cite{Char},  \cite{Charhf}, \cite{Charl}.

\section{Other constructions of star products -- Graphs}\label{S:graph}

\subsection{Bordemann and Waldmann}\label{SS:bw}
\cite{BW}
Fedosov's proof of the existence of a star product for every
symplectic
manifold was geometric in its very nature \cite{Fed}.
He considers a certain infinite-dimensional bundle $\hat{W}\to
M$ of formal series of symmetric and antisymmetric forms on the
tangent bundle of $M$. For this bundle he defines the fiber-wise 
Weyl product.
Denote by $\hat{\mathcal W}$ the sheaf of smooth sections of this
bundle, with $\circ$ as induced product.

Starting from a symplectic torsion free
connection
he constructs recursively what is called the Fedosov derivation  
$D$ for
the sheaf of sections. It is flat, in the sense that $D^2=0$.
 The kernel of $D$ is a $\circ$-subalgebra. Let
${\mathcal W}$ be the elements of $\hat{\mathcal W}$ 
for which the values have antisymmetric
degree
zero. 
The natural projection to the symmetric degree zero part gives 
a linear isomorphism 
  from the $\circ$-subalgebra 
$\sigma:{\mathcal W}_D=\ker D\cap {\mathcal W}
\to \Cim[[\nu]]$.
The algebra structure of  ${\mathcal W}_D$ gives the star product we
were looking for, i.e. 
$f\star g:=\sigma(\tau(f)\circ\tau(g))$ with $\tau$ the inverse of
$\sigma$ which  recursively can  by calculated.

In case that $M$ is an arbitrary K\"ahler manifold, Bordemann and
Waldmann \cite{BW} were able to modify the set-up by taking the
fiber-wise Wick product.
By a modified Fedosov connection a star
product $\star_{BW}$  is obtained 
which is of Wick type, i.e. $C_k(.,.)$ for $k\ge 1$ has only 
holomorphic derivatives in the first argument and anti-holomorphic
arguments in the second argument.
Equivalently, it is of separation of variables type.
As already remarked earlier, its  Karabegov form is $-(1/\nu)\omega$
and it has Deligne-Fedosov class
$cl(\star_{BW})=\frac {1}{\mathrm{i}}(\frac {1}{\nu}[\omega]+\frac
{\delta}{2})$. It will be equivalent to the BT star product if the
canonical class is trivial.

Later Neumaier \cite{Neu1} was able to show that each star product
 of separation of variables type 
(i.e. the star products opposite to the  Karabegov
star product from \refSS{kara})
can be obtained by the Bordemann-Waldmann construction  
by  adding  a formal closed $(1,1)$ form as
parameter in the construction.
\subsection{Reshetikhin and Takhtajan}\label{SS:resta}
\cite{ResTak}
In the following subsections we will indicate certain relations between
the question of existence and/or
 the calculation of coefficients  of
star products and their description 
by graphs. One of the problems in the context of star products is that 
the questions reduce often to rather intricate combinatorics of 
derivatives of the involved functions and other ``internal'' 
geometrical data coming from the manifold, like Poisson form, K\"ahler
form,
etc. 
One has to keep track of multiple derivations of
many products and sums involving tensors related to the Poisson
structure,
metric, etc.  and the
functions $f$ and $g$.
In this respect graphs are usually a very helpful tool to control the
combinatorics and to find ``closed expressions'' in terms of graphs.

\medskip

Berezin in his approach  to define 
a star product  for complex domains in $\C^n$ used 
analytic integrals
depending on a real parameter $\hbar$.
Compare this to  \refE{bsint} where due to compactness 
we have a discrete parameter $1/m$.
In these integrals scalar products of coherent states
show up. Similar to \refP{kernel} they are identical to the 
Bergman kernel. Under the condition that the K\"ahler form is
real-analytic its K\"ahler potential $\Phi$ admits an
analytic continuation $\Phi(z,\wb)$ on $\C^n\times\C^n$.
\footnote{ In this subsection for the formalism of
analytic continuation, it is convenient to write $f(z,\zb)$ for a function
$f$ on $M$  to indicate 
its dependence on holomorphic and anti-holomorphic directions.}
The Bergman kernel can be rewritten 
with a suitable complementary factor $e_{\hbar}(z,\wb)$ as
\begin{equation}\label{E:bsplit}
\Bh(z,\wb)=\e^{\Phi(v,\wb)}e_{\hbar}(z,\wb).
\end{equation}
Moreover, one considers Calabi's diastatic function
\begin{equation}
\Phi(z,\zb,w,\wb)=\Phi(z,\wb)+\Phi(w,\zb)-\Phi(z,\zb)
-\Phi(w,\wb).
\end{equation}
The corresponding integral rewrites as
\begin{equation}\label{E:bint}
(f\star_{\hbar}g)(z,\zb)
=\int_{\C^n}f(z,\wb)g(w,\zb)
\frac {e_{\hbar}(z,\wb)e_{\hbar}(w,\zb)}{e_{\hbar}(z,\zb)}
e^{(\Phi(z,\zb,w,\wb)/\hbar}\,\Omega_\hbar,
\end{equation}
where $\Omega_\hbar$ is the $\hbar$ normalized 
Liouville form.
To show that the 
integral gives  indeed a star product
Berezin needs the crucial assumption  $e_{\hbar}(z,\wb)$ is constant.
The desired results are obtained via the 
Laplace method.

Reshetikhin and Takhtajan 
consider now such type of integrals 
(still ignoring the $e_{\hbar}(z,\wb)$) as formal integrals and make
a formal Laplace expansion to obtain a ``star'' product, 
which we denote for the
moment by $\bullet$.
The coefficients of the expansion for $f\bullet g$ can be expressed with the
help of partition functions of a restricted set ${\mathcal G}$ of
locally oriented graphs (Feynman diagrams)
fulfilling some additional conditions and equipped with 
additional data.
In particular,  each $\Gamma\in \mathcal{G}$ contains two special
vertices, a vertex  $R$  with only incoming edges and
and a vertex $L$ with only outgoing edges.
Furthermore, the other vertices are divided into 
two sets, the solid and the hollow vertices.
The ``star'' product for $\C^n$ as formal power series in $\nu$
can be written as
\begin{equation}\label{E:rt}
f\bullet g=
\sum_{\Gamma\in {\mathcal G}}\frac {\nu^{\chi(\Gamma)}}{|Aut(\Gamma)|}
D_\Gamma(f,g).
\end{equation}
Here 
 $Aut(\Gamma)$ is the
subgroup of automorphism of the graph $\Gamma$ respecting the special
structure, $\chi(\Gamma)$ is the number of edges of $\Gamma$ minus
the number of ``solid'' vertices.
The crucial part is 
$D_\Gamma(f,g)$  {\it the partition function}
of the graph $\Gamma$ equipped with certain additional
data. It encodes the information from the formal
expansion of the integral associated to this graph.
The special vertex $L$ is responsible for
differentiating $f$ with respect to anti-holomorphic coordinates
and $R$ for
differentiating $g$ with respect to holomorphic coordinates.
It is sketched that 
the product $\bullet$ is ``functorial''
with respect to holomorphic changes of coordinates
and that it defines a formal deformation quantization
for any  arbitrary complex manifold $M$ with K\"ahler form $\omega$.
But as 
in general $1\bullet f\ne f\ne f\bullet 1$, i.e. it is
not null on constants.
Essentially this 
is due to the fact, that the complementary factors
$e_{\hbar}(z,\wb)$ \refE{bsplit}
were not taken into account.
But the obtained algebra contains a unit element 
$e_{\nu}(z,\zb)$ which is
invertible.
This unit is used to twist $\bullet$ 
\begin{equation}
(f\star g)(z,\zb)=e_{\nu}^{-1}(z,\zb)
((f\cdot e_\nu)\bullet (g\cdot e_\nu))
\end{equation}
to obtain a star
product $\star$  which is null on constants.
As the notation already indicates, 
the unit $e_{\nu}(z,\zb)$ is related to the formal
Bergman kernel evaluated along the diagonal.
\subsection{Gammelgaard}\label{SS:gam}\cite{Gam}
His starting point is the formal deformation $\widehat{\w}$ of the
pseudo-K\"ahler form $\w=\w_{-1}$ given by \refE{defw}.
Let $\star$ be the unique star product of separation of variables 
type (in the convention of Karabegov) 
associated to $\widehat{\w}$ which exists globally.
Gammelgaard gives a local expression of this star product by 
\begin{equation}\label{E:gam}
f\star g=
\sum_{\Gamma\in{\mathcal A}_2}
\frac {\nu^{W(\Gamma)}}{|Aut(\Gamma)|}
D_{\Gamma}(f,g).
\end{equation}
This looks similar to \refE{rt} but of the set of graphs to
be considered are different. Also the partition functions
will be different.
Local means that he chooses for every point a contractible
neighborhood such that $\widehat{\w}$ has a formal potential
\refE{formp}.
The set ${\mathcal A}_2$ is the set of isomorphy classes 
of directed acyclic graphs
(parallel edges are allowed) which have exactly one vertex which is
a sink (i.e. has only  incoming edges) and  one vertex which is
a source (i.e. has only outgoing edges). These two vertices are called
external vertices, the other internal.
As usual we denote by $E$ the set of edges and by $V$ the set of
vertices of the graph $\Gamma$. The graphs are weighted by assigning
to every internal vertex $v$ an integer $w(v)\ge -1$.
Each internal vertex has at least one incoming and one 
outgoing edge.
If  $w(v)=-1$ then  at least 3 edges are connected with $v$.
The total weight $W(\Gamma)$ of the graph $\Gamma$ 
is defined as the sum $W(\Gamma):=|E|+\sum_{v \ \text{internal}}
w(v)$.
Isomorphism are required to respect the structure.
Also in this sense $|Aut(\Gamma)|$ has to be understood.

To each such graph a certain bidifferential operator is assigned. It
involves the geometric data and the functions $f$ and $g$. 
The function $f$ corresponds to the external vertex which is a 
source and   $g$ to the sink.
The internal vertices of weight $k$ involve  $-\Phi_k$ from
\refE{formp}.
Incoming edges correspond to taking derivatives with
respect to holomorphic coordinates, outgoing with respect to
anti-holomorphic coordinates.
Hence $f$ is only differentiated with respect to 
anti-holomorphic and $g$ with respect to holomorphic.
The partition function is now obtained by contracting the tensors
with the help of the K\"ahler metric.

In the main part of the paper \cite{Gam} 
Gammelgaard shows that this definition 
is indeed associative and defines locally a star product  with 
the (global) Karabegov form $\widehat{\w}$ he started with.
Hence it is the local restriction of $\star$.

The formula is particularly nice if there are not so many 
higher order  terms
in $\widehat{\w}$. 
For example for $\widehat{\w}=(1/\nu)\w_{-1}$, i.e. the
``standard star product'' only
those graphs contribute for which all vertices have weight $-1$.
For the Berezin star product we will have in general higher degree
contributions, see \refE{kf}.
But the opposite of the Berezin-Toeplitz 
star product has Karabegov form 
$-(1/\nu)\omega+\w_{can}$, hence only graphs which have only
vertices of weight $-1$ or $0$ will contribute.
As Gammelgaard remarks this allows to give explicit formulas for 
the coefficients of the BT star product. Recall that for the
opposite star product only the role of $f$ and $g$ is switched.

As an example let me derive the ``trivial coefficients''.
The only graph of weight zero is the one consisting on the
two external vertices and no edge. Hence $C_0(f,g)=f\cdot g$ as
required. The only graph of weight one consists of
the two external vertices and a directed edge between them. 
Hence, we obtain for every $\widehat{\w}=(1/\nu)\omega_{-1}+...$
the expression \refE{c1b}, and for the Berezin-Toeplitz star product
\refE{c1bt} (note that we have to take the pseudo-K\"ahler form
$-\omega_{-1}$ and switch the role of $f$ and $g$).
Internal vertices will only show up for weights $\ge 2$.

\subsection{Huo Xu}\label{SS:xu}\cite{Huo1},\cite{Huo2}
His starting point is the Berezin transform.
Let us assume it exists, which  
at least is true in the case of compact quantizable K\"ahler
manifolds.
As explained in \refSS{calc} via the formula \refE{rec2}  the
coefficients
of the Berezin star product are given.
Based on Engli{\v s}'s work \cite{Eng0} Huo Xu found 
a very nice
way to deal with the Bergman kernel \cite{Huo1}
in terms of
certain graphs.
In \cite{Huo2}
he  applies the result to the Berezin transform and 
Berezin star product.
His formula for the product is
\begin{equation}
f\star_B g=\sum_{\Gamma\in\mathcal{G}}
\frac {\det(A(\Gamma_-)-Id)}{|\mathrm{Aut}'(\Gamma)|}
 \nu ^{|E|-|V|}\,
D_\Gamma(f,g)=\sum_{k=0}^{\infty}C_k^B(f,g)\nu^k.
\end{equation}
Here $\mathcal{G}$ is a certain subset of pointed directed graphs 
(i.e. in technical terms 
it is the set of strongly connected pointed stable graphs
-- loops and cycles are allowed)
consisting of the vertices $V\cup {v}$ (with $v$ the distinguished 
vertex) and edges $E$. After erasing the vertex $v$ the graph
$\Gamma_-$ is obtained. Now $A(\Gamma_-)$ is its adjacency matrix.
$|\mathrm{Aut}'(\Gamma)|$ is the number of automorphisms 
of the pointed graph  fixing the distinguished vector. 
The only object which is a function is again the 
{\it partition function} $ D_\Gamma(f,g)$ of the graph defined like 
follows. Each such graph $\Gamma$ encodes a ``Weyl invariant'' given
in
terms of partial derivatives and contractions of the metric. 
This defines the partition function, whereas the distinguished 
vertex is replaced by ``$f$'' and ``$g$''. All incoming edges are
associated to $f$ and correspond to $\frac {\partial}{\partial \zb_i}$
derivatives and all outgoing are associated to $g$ and 
correspond to $\frac {\partial}{\partial z_i}$. 
For the precise formulations of his results I refer to his work.

For small orders he classifies the graphs and calculates 
for $k$ up to three the $C_k^B(f,g)$ and   $C_k^{BT}(f,g)$ in terms of
the metric data.
But again the reformulation to explicit formulas tend to become 
quite involved with increasing $k$.

\medskip

The approaches via graphs presented in Sections
\ref{SS:resta}.,\ref{SS:gam}, and \ref{SS:xu} for sure are in some sense
related as they center around the same objects. 
But the set of graphs considered are completely different.
Further investigation is necessary to understand this relation.
See in this direction the very recent preprint of Xu \cite{Huo3}.


\section{Excursion: The Kontsevich construction}\label{SS:kon}

Kontsevich showed in \cite{Kont} the existence of a star product for
every
Poisson manifold $(M,\{.,.\})$.
In fact he proves the more general formality conjecture which implies
the
existence. It is not my intention even to give a sketch of this here.
Furthermore, in the K\"ahler case we are in the symplectic
case and there are other existence and classification proofs obtained
much earlier.
Nevertheless, as we are dealing with graphs and star product in 
the previous section, it is very interesting  
to sketch his explicit formula for the
star product in terms of Feynman diagrams.

He considers 
star products for  open sets in $\mathbb{R}^d$ with
arbitrary Poisson structure given by 
the Poisson bivector
$\alpha=(\alpha^{ij})$. In local coordinates $\{x_i\}$ the
Poisson bracket is given as
\begin{equation}
\{f,g\}(x)=\sum_{i,j=1}^d\alpha^{ij}(x)
 {\partial_i f} {\partial_j g},
\quad \partial_i:=\frac {\partial}{\partial x_i}.
\end{equation}
The star product is defined by
\begin{equation}\label{E:skont}
f\star g=f\cdot g+\sum_{n=1}^\infty
\left(\frac {\mathrm{i}\,\nu}{2}\right)^{n}
\sum_{\Gamma\in \mathcal{G}_n}w_\Gamma D_\Gamma(f,g).
\end{equation}
Here $\mathcal{G}_n$ is a certain subset of graphs of order $n$,
and the partition function
$D_\Gamma$ is a bidifferential operator involving the 
Poisson bivector $\alpha$ (of homogeneity $n$).
The graph $\Gamma$ 
encodes which derivatives have to be taken in $D_\Gamma$ and
$w_\Gamma$ is a weight function.

More precisely,
$\mathcal{G}_n$ consists of oriented graphs with $n+2$ vertices, labeled by
$1,2,\ldots,n,L,R$, such that at each numbered vertex $[i]$, 
$i=1,\ldots,n$ exactly
two edges $e_i^1=(i,v_1(i))$ and $e_i^2=(i,v_2(i))$  start
and end at two different other vertices (including $L$ and $R$) but not
at $[i]$ itself. Each such graphs has $2n$ edges. 
Denote by $E_\Gamma$ the set of edges.
The number of graphs in $G_n$ is $(n(n+1))^2$ for $n\ge 1$ and
$1$ for $n=0$.
The bidifferential operator is defined by
\begin{equation}
\begin{split}
D_\Gamma(f,g):=\sum_{I:E_\Gamma\to \{1,2,\ldots,d\}}
\left(
\prod_{k=1}^n\left(\prod_{\substack{ e\in E_\Gamma\\
            e=(*,k)}} \partial_{I(e)}\right)
\alpha^{I(e_k^1)I(e_k^2)}\right)\times
\\
\times\left( \prod_{\substack{e\in E_\Gamma\\
            e=(*,L)}} \partial_{I(e)}\right)f\cdot
\left( \prod_{\substack{e\in E_\Gamma\\
            e=(*,R)}} \partial_{I(e)}\right)g.
      \end{split}
    \end{equation}
The summation can be considered as assigning to the $2n$ edges 
independent indices $1\le i_1,i_2,\ldots, i_{2n}\le d$ as labels.
\begin{example}
Let $\Gamma$ be the graph with vertices $(1,2,L,R)$ and edges
$$
e_1^1=(1,2),\quad e_1^2=(1,L),\quad e_2^1=(2,L),\quad  e_2^2=(2,R).
$$
Then 
$$
D_\Gamma(f,g)=\sum_{i_1,i_2,i_3,i_4=1}^d
(\alpha^{i_1i_2})(\partial_{i_1}\alpha^{i_3i_4})
(\partial_{i_2}\partial_{i_3}f)(\partial_{i_4}g).
$$
\end{example}
The weights $w(\Gamma)$ are calculated by considering the upper half-plane
$H:=\{z\in\mathbb{C}\mid Im(z)>0\}$ with the Poincare metric.
Let $C_n(H):=\{u\in H^n\mid u_i\ne u_j, \text{ for } i\ne j\}$ be the
configuration space of $n$ ordered distinct points on $H$.
For any two points $z$ and $w$ on $H$ we denote by 
$\phi(z,w)$ the (counterclock-wise) 
angle between the geodesic connecting $z$ and
$\mathrm{i}\infty$
(which is a straight line) and the geodesic between $z$ and $w$.
Let  $d\phi(z,w)=\frac {\partial}{\partial z}\phi(z,w)dz+
 \frac {\partial}{\partial w}\phi(z,w)dw$ be the differential.
The weight is then defined as
\begin{equation}
w_\Gamma=\frac {1}{(2\pi)^{2n}n!}\int_{C_n(H)}
\wedge_{i=1}^n d\phi(u_i,u_{v_1(i)})\wedge d\phi(u_i,u_{v_2(i)}),
\end{equation}
with the convention that for 
$L$ and $R$ the values at the boundary (of $H$)
$u_L=0$ and $u_R=1$ are taken.

\begin{remark}

In \cite{CF} Cattaneo and Felder 
gave a field-theoretical interpretation of
the formula \refE{skont}.
They introduce a sigma model defined on the unit disc $D$
(conformally equivalent to the upper half-plane)
with values in the Poisson manifold $M$ as target space.
 The model contains two 
bosonic fields: (1) $X$, which is function on the disc, and (2) $\eta$, 
which is a differential 1-form on $D$ taking values in the pullback 
under  $X$ of the cotangent bundle of $M$,
i.e. a section of $X^*(T^*M)\otimes T^*D$.

In local coordinates $X$ is given by $d$ functions $X_i(u)$ and 
$\eta$ by $d$ differential 1-forms 
$\eta_i(u)=\sum_{\mu}\eta_{i,\mu}(u)du^{\mu}$.
The boundary condition for $\eta$ is that for $u\in\partial D$, 
$\eta_i(u)$ vanishes on vectors tangent to $\partial D$.
The action  is defined as
\begin{equation}
S[X,\eta]=\int_D\sum_i\eta_i(u)\wedge dX^i(u)+\frac 12\sum_{i,j}
\alpha^{ij}(X(u))
\eta_i(u)\wedge \eta_j(u).
\end{equation}
If $0,1,\infty$ are any three cyclically ordered points on the 
boundary of the disc, the star product can be given 
(at least formally)
as the semi-classical expansion of the path-integral
\begin{equation}
f\star g\;(x)=\int_{X(\infty)=x}f(X(1))g(X(0))\exp(\frac {i}{\hbar}
S[X,\eta])dXd\eta\ .
\end{equation}
To make sense of the expansion  and to  perform the quantization
a gauge action has to be divided out. 
After this the  same formula as by Kontsevich is obtained,
except that 
in the sum over the graphs also graphs with loops (also called
tadpoles) appear.
The corresponding integrals which supply the weights associated to the
graphs with loops are not absolutely convergent. 
These graphs are removed by a certain technique called 
finite renormalization.
In this way Cattaneo and Felder
 give a very elucidating (partly heuristic) approach
to Kontsevich formula for the star product.
\end{remark}

How the Kontsevich construction is related to the other
graph construction presented in \refS{graph} is unclear at the
moment.  
\section{Some applications of the Berezin-Toeplitz operators}
\label{S:appl}

In this closing section  we will give some references indicating 
some applications 
of the Berezin-Toeplitz quantization scheme. The interested reader is
invited to check the quoted literature for full details, 
and more references. 
This list of applications and references is rather incomplete.


\subsection{Pull-back of the Fubini-Study metric, extremal metrics,
balanced embeddings}

Let  $(M,\omega)$ be a K\"ahler manifold 
with very ample quantum line bundle $L$.  After choosing an
orthonormal basis of the space $\ghm$ we 
can use them to construct an embedding 
$\phi^{(m)}:M\to\Pro^{N(m)}$ 
of $M$ into projective space of dimension $N(m)$, see \refR{ample}.
On  $\Pro^{N(m)}$ we have as standard K\"ahler form the Fubini-Study  form
$\omega_{FS}$ (and its associated metric).
The pull-back
$(\phi^{(m)})^*\omega_{FS}$ will define a K\"ahler form on $M$. It 
will not depend on the orthogonal basis
chosen for the embedding. 
In general it will not coincide with 
a scalar multiple of the K\"ahler form $\omega$ we started with
(see \cite{BerSchlcse} for a thorough discussion of the
situation).

It was shown by Zelditch \cite{Zel}, by generalizing a result
of Tian \cite{Tian} and Catlin \cite{Cat}, that  
$(\Phi^{(m)})^*\omega_{FS}$ admits a complete asymptotic expansion in 
powers of $\frac 1m$ as $m\to\infty$.

In fact it is related to the asymptotic expansion of the
Bergman kernel  \refE{um} along the diagonal. 
The pullback calculates as
\cite[Prop.9]{Zel}
\begin{equation}\label{E:zel}
\left(\phi^{(m)}\right)^*\w_{FS}=m\w+\iu\partial\bar\partial\log u_m(x)\ .
\end{equation}
In our context of star products it is interesting to note that if
in \refE{zel} 
we  replace $1/m$ by $\nu$ we obtain 
the Karabegov form of the star product $\star_B$  \refE{kf}
\begin{equation}
\widehat{\w} =\F(\left(\phi^{(m)}\right)^*\w_{FS}).
\end{equation}

The  asymptotic expansion of $(\phi^{(m)})^*\w_{FS}$
is called Tian-Yau-Zelditch expansion.
Donaldson \cite{Don1}, \cite{Don2} took it as the starting point 
to study the existence and uniqueness of
constant scalar curvature K\"ahler metrics $\w$ on  compact manifolds.
If they exists at all he 
approximates them 
by using so-called balanced metrics 
on sequences of powers of the line bundle $L$
obtained by balanced embeddings.
Balanced embeddings are embeddings fulfilling certain additional
properties introduced by Luo \cite{Luo}. They are related to stability
of the embedded manifolds in the sense of classifications in 
algebraic geometry.

It should be remarked that the ``balanced condition'' is 
equivalent to the fact that Rawnsley's \cite{Raw} epsilon function
\refE{rawe} is constant.
See also \cite[Prop.6.6]{SchlBer}.
 This function was introduced in 1975 by Rawnsley in the context of
quantization of K\"ahler manifolds and further developed by
Cahen, Gutt, and Rawnsley \cite{CGR1}. 
In particular it will be constant if the quantization is ``projectively
induced'', i.e. coming from the projective space of the coherent
state embedding \refE{cohse}. 
See \refSS{borg} for consequences about the possibility of 
Berezin's original construction of a star product.

Let me give beside the already mentioned a few more names
related to the existence and uniqueness of 
constant scalar curvature K\"ahler metrics:
Lu \cite{Lu}, Arezzo and Loi \cite{ArLoi},
Fine \cite{Fine}.
For sure much more should be mentioned, but space limitation
do not allow.


\subsection{Topological quantum field theory and 
mapping class groups}

In the context of Topological Quantum Field Theory (TQFT) the
moduli space $M$ of gauge equivalence classes of flat $SU(n)$ connections 
(possibly with monodromy around a fixed point)
over a compact Riemann surface  $\Sigma$ plays an important role.
This moduli space carries a symplectic structure $\omega$ and a
complex
line bundle $L$.
After choosing a complex structure on $\Sigma$ this moduli space will
be endowed with a complex structure, 
$\omega$ will become a K\"ahler form and 
$L$ get a holomorphic structure. Moreover $L$ will be a quantum
line bundle in the sense discussed in this review. Hence, we can employ
the Berezin-Toeplitz quantization procedure to it.
The quantum space of level $m$ will be as above the 
(finite-dimensional) space of
holomorphic sections of the bundle $L^m$ over  $M$. 
If we vary the complex structure on $\Sigma$ 
the differentiable (symplectic) data will stay the same, but 
the  
complex geometric
data will vary. In particular, our family of quantum spaces will define
a vector bundle over the Teichm\"uller space (which is the space
of complex structures on $\Sigma$).
This bundle is called the \emph{Verlinde bundle} of level $m$.
There is a canonical projectively flat connection  for this bundle, the
Axelrod-de la Pietra-Witten/Hitchin connection.

Via the
projection  to the subspace of holomorphic section, 
the Toeplitz operators will depend on the complex structure.
For a fixed differentiable function $f$ on the moduli space 
of connections the Toeplitz operators will define a section
of the endomorphism bundle of the Verlinde bundle.

The mapping class group acts on 
the geometric situation.
In particular, it acts on the space of projectively 
covariant constant sections of
the Verlinde bundle.
This yields a representation of the mapping class group. 
By general results about the order of the elements in the
mapping class group it cannot act faithfully.
But it was a conjecture of Tuarev, that 
at least it acts asymptotically faithful. This says that given a
non-trivial element
of the mapping class group there is a level $m$ such that the element
has a non-trivial action on the space of 
projectively covariant constant sections of the Verlinde bundle of 
level $m$.

This conjecture was shown by J. Andersen in a beautiful 
proof using Berezin-Toeplitz techniques. For an exact formulation of
the statement see \cite{And1}, resp. the  overview by
Andersen and Blaavand \cite{AndBla}, and
\cite{Schlsu}.

With similar techniques Andersen could show that 
the mapping class groups $\Gamma_g$ for genus $g\ge 2$ do not
have Property (T) \cite{Andpt}. Roughly speaking Property (T) means
that
the trivial representation is isolated  (with respect to
a certain topology) in the space of all unitary representations.

There are quite a number of other interesting results shown 
and techniques developed by Andersen using Berezin-Toeplitz
quantization
operators and star products, e.g. in the context of
Abelian Chern-Simons Theory \cite{And2}, 
modular functors (joint with K. Ueno) \cite{AU},
and formal Hitchin connections \cite{AnGa}.


\subsection{Spectral theory -- quantum chaos}

The large tensor power behaviour of the sections of the
quantum bundle and of the Toeplitz operators are of interest.

Shiffman and Zelditch considered in \cite{ShiZel} the 
limit distribution of zeros of such sections. The results are 
related to models in \emph{quantum chaos}. 
See also other publications of the same authors.

As mentioned in \refS{btq},
the Toeplitz operators associated to real-valued functions 
are self-adjoint. Hence, they have a real  spectrum.
With respect to this the following result 
on the trace is of importance
\begin{equation}\label{E:trop}
\Tr^{(m)}\,(T^{(m)}_{f})
        =m^n\left(\frac 1{\volu (\Pro^{n}(\C))}
\int_M f\, \Omega +O(m^{-1})\right)\ .
\end{equation}
Here $n=\dim_{\C}M$ and $\Tr^{(m)}$ denotes the trace on
$\End(\ghm)$.
See \cite{BMS}, resp. \cite{Schldef} for a detailed proof.

On the spectral analysis of Toeplitz operators see
e.g. articles by Paoletti
\cite{Pao1}, \cite{Pao2}, \cite{Pao3}.
For relation to index theory see e.g.
work of Boutet de Monvel, Leichtnam, Tang, and Weinstein 
\cite{BLTW}, and 
Bismut, Ma, and Zhang \cite{BMZ}.


\subsection{Automorphic forms}

Another field where the set-up developed in this review shows
up in a natural way is the theory  of automorphic forms.
For example, let $B^n=SU(n,1)/S(U(n)\times U(1))$ be the open unit
ball and $\Gamma$ a discrete, cocompact subgroup of $SU(n,1)$ then
the quotient $X=\Gamma/B^n$ is a compact complex manifold.
Moreover,  the invariant K\"ahler form on $B^n$ will 
descends to a K\"ahler form $\omega$ on the quotient.
The canonical line bundle (i.e. the bundle of holomorphic
$n$-forms) is a quantum line bundle for $(X,\omega)$.

By definition the sections of the tensor powers of 
this line bundle correspond to functions 
 on $B^n$ which 
are equivariant under the action of $\Gamma$ with a certain
factor of automorphy. 
In other words they are automorphic forms.
The power of the factor of automorphy  is related to the 
tensor power of the bundle.
An important problem is to construct sections, resp. automorphic
forms. For example, Poincar\'e series are obtained by an averaging
procedure and give naturally such sections. But it is not
clear that they are not identically zero.
T. Foth \cite{Foth1}  worked in the frame-work
of Berezin-Toeplitz operators to show that at least
for higher tensor powers there are non-vanishing Poincar\'e 
series.
In this process she used 
techniques proposed by 
Borthwick, Paul, and Uribe \cite{BPU}
and assigns to Legendrian 
 tori sections of
the bundles. By asymptotic expansion the non-vanishing follows.
See also  \cite{Foth2}.

\bibliographystyle{amsplain}

\end{document}